\def\div{\hbox{div}}
\newtheorem{thm}{Theorem}[section]
\newtheorem{theorem}[thm]{Theorem}
\newtheorem{definition}[thm]{Definition}
\newtheorem{lemma}[thm]{Lemma}
\newtheorem{proposition}[thm]{Proposition}
\newtheorem{rem}[thm]{Remark}
\numberwithin{equation}{section}
\newcommand{\Rmnum}[1]{\expandafter\@slowromancap\romannumeral #1@}
\title{\bf On 2D NLS on non-trapping exterior domains}
\author{ Farah ABOU SHAKRA }
\thanks{The author is a Lebanese CNRS scholar.}
\address{Universit\'{e} Paris 13, Sorbonne Paris Cit\'{e}, LAGA, CNRS, UMR 7539, F-93430, Villetaneuse, France.
}
\email{shakra@math.univ-paris13.fr}
\begin{document}

\begin{abstract}
Global existence and scattering for the nonlinear defocusing Schr\"{o}dinger equation in 2 dimensions are known for domains exterior to 
star-shaped obstacles 
and for nonlinearities that grow at least as the quintic power. In this paper, we extend the global existence result for all non-trapping 
obstacles and for nonlinearities with power strictly greater than quartic. 
For such nonlinearities, we also prove scattering for a class of so-called almost star-shaped obstacles.
\newline
\emph{Keywords.} Schr\"{o}dinger equation, scattering, almost star-shaped.
\end{abstract}

\maketitle

\section{Introduction and Background}
 We are interested in this paper in the nonlinear Schr\"{o}dinger equation in exterior domains $\Omega=\mathbb{R}^n\setminus V$ 
where $V$ is a non-trapping obstacle with smooth boundary with Dirichlet boundary condition
\begin{align}\label{schrodinger}
&i\partial_{t}u+\Delta u=\pm|u|^{p-1}u\;\;in\;\;\Omega=\mathbb{R}^{n}\setminus V,\;\;p\geq1\notag\\
&u|_{\mathbb{R}\times\partial\Omega}=0\\
&u(0,x)=u_{0}(x)\notag
\end{align}
The class of solutions to \eqref{schrodinger} is invariant by the scaling
\begin{equation}\label{scaling}
u(t,x)\longrightarrow \lambda^{\frac{2}{p-1}}u(\lambda^{2}t,\lambda x)
\end{equation}
This scaling defines a notion of criticality, specifically, for a given Banach space of initial data $u_{0}$, the problem is called critical 
if the norm is invariant under \eqref{scaling}. The problem is called subcritical if the norm of the rescaled solution diverges as $\lambda\rightarrow\infty$; 
if the norm shrinks to zero, then the problem is supercritical.
Moreover, considering the initial value problem \eqref{schrodinger} for $u_{0}\in \dot{H}^{s}(\mathbb{R}^{n})$, the problem is critical when 
$s=s_{c}:=\frac{d}{2}-\frac{2}{p-1}$, subcritical when $s>s_{c}$, and supercritical when $s<s_{c}$.\newline
Now, denote by 
\begin{equation}\label{energyandmass}
M(u)=\int_{\Omega}|u|^{2}dx\;\mbox{and}\;E(u)=\frac{1}{2}\int_{\Omega}|\nabla u|^{2}dx\pm\frac{1}{p+1}\int_{\Omega}|u|^{p+1}dx, 
\end{equation}
the mass and the energy which are conserved.
\vspace{3mm}

For the case of 3D exterior domains, Planchon and Vega obtained in \cite{PV09} an $L^{4}_{t,x}$ 
Strichartz estimate and they used it along with local smoothing estimates 
near the boundary to prove the local well-posedness of the family of nonlinear 
equations \eqref{schrodinger} for $1<p<5$ and $u_{0}\in H_{0}^{1}(\Omega)$, and that the solution is global for the defocusing case (+ sign in \eqref{schrodinger}). 
They also proved scattering for the cubic defocusing nonlinear equation outside star-shaped obstacles for initial data in $H_{0}^{1}$. 
For the energy critical case $p=5$, Ivanovici proved in \cite{I10} local well-posedness for solutions with initial data in $H^{1}$ and global well-posedness for 
small data, outside strictly convex obstacles using the Melrose-Taylor parametrix. Scattering results were also obtained for all subquintic defocusing 
nonlinearities. 
Ivanovici and Planchon then extended in \cite{IP10} the local well
posedness (and global for small energy data) to the quintic nonlinear Schr\"{o}dinger equation for any non-trapping domain in $\mathbb{R}^{3}$ 
using the smoothing effect in $L^{5}_{x}(L^{2}_{t})$ for the linear 
equation. Their local result also holds for the Neumann boundary condition. They also extended the scattering of solutions to the 
defocusing nonlinear equation outside star-shaped obstacles with initial data in $H_{0}^{1}$ for $3\leq p<5$. 
A very recent result was obtained by Killip, Visan, and Zhang in \cite{KVZ12} for the quintic defocusing NLS in the exterior of strictly convex 3D obstacles 
with the Dirichlet boundary condition, where they proved global well-posedness and scattering for all initial data in the energy space.


Our main interest here is exterior domains in 2 dimensions which is known to be the most difficult one regarding scattering questions even in the case of the full space $\mathbb{R}^{n}$. 
In fact, after the 
results of Ginibre and Velo \cite{GV85} for $\mathbb{R}^{n}$ ($n\geq 3$) for the $H^{1}$ subcritical case that corresponds to the case $0<s_{c}<1$, the obstruction of the 
dimension was removed by Nakanishi \cite{N99} (in dimensions 1 and 2, all powers $p$ have an $s_{c}$ that is less than 1), but 
his techniques are not well suited for the domains case. However, a fundamental contribution to the existence and scattering theory in the whole space 
and that turned out later \cite{PV09} to be suitable for the case of exterior domains, was by Colliander, Keel, 
Staffilani, Takaoka, and Tao (\cite{CKSTT04}, \cite{CKSTT08}) through introducing the Morawetz interactive inequalities. Similar problem with low 
dimensions appears due to the sign of the bilaplacian term that comes from the use of a convex weight which is the euclidean distance. 
The sign turns out to be wrong for dimensions less than 3. 
This obstruction was then overcome simultaneously and independently by Colliander, Grillakis, and Tzirakis in \cite{CGT09} as well as by Planchon and Vega in \cite{PV09}. 

In \cite{PV09} the authors also used the bilinear multiplier technique to obtain their results for exterior domains in 3D. Again, just like in the whole space, the obstruction of the dimension appears as a result of the sign of the bilaplacian. 
That is why the local smoothing (Prop. 2.7 in \cite{PV09}), which is a key 
ingredient in the proof of existence and scattering, was given in dimension 3 and higher. 
However, Planchon and Vega recently removed this restriction in \cite{PV2D} and they obtained global existence and scattering results in 2D domains 
exterior to star-shaped obstacles to the nonlinear defocusing problem with initial data in $H_{0}^{1}$ and for $p\geq 5$.

The main idea in \cite{PV2D}
was using the tensor product technique (as developed e.g. in
\cite{CHVZ08} to obtain a quadrilinear Morawetz interaction estimate
in $\mathbb{R}$)
by constructing $v(x,y)=u(x)u(y)$ solution of the nonlinear 
Schr\"{o}dinger in $\Omega\times\Omega$, and then using the local smoothing 
inequality obtained from Morawetz's multipliers in dimension $n=4$ thus resolving the issue of the wrong sign of the bilaplacian in dimension 2. 
Their local smoothing estimate is a key step to get that $D^{1/2}(|u|^{2})$ is in $L^{2}_{t,x}$ for both the nonlinear and linear solutions 
which leads to obtain the global in time Strichartz estimate $L^{p-1}_{t}L^{\infty}_{x}$ (for the case of star-shaped obstacles) which is the key factor to get their 
result.

In this paper we extend the result of Planchon and Vega in two directions, the range of nonlinearities and the class of obstacles under 
consideration. First, we extend the local existence for $p>4$ and for any non-trapping obstacle by using the following set of Strichartz 
estimates obtained by Blair, Smith, and Sogge in \cite{BSS10}:
\begin{theorem}\label{BSStheorem}(Theorem 1.1, \cite{BSS10})
Let $\Omega=\mathbb{R}^{n}\setminus V$ be the exterior domain to a compact non-trapping obstacle with smooth boundary, and $\Delta$ the 
standard Laplace operator on $\Omega$, subject to either Dirichlet or Neumann conditions. Suppose that $p>2$ and $q<\infty$ satisfy
\begin{equation*}
\left\{
\begin{aligned}
&\frac{3}{p}+\frac{2}{q}\leq 1,\;\;n=2,\\
&\frac{1}{p}+\frac{1}{q}\leq\frac{1}{2},\;\;n\geq 3.\\
\end{aligned}
\right.
\end{equation*}
Then for $e^{it\Delta}f$ solution to the linear Schr\"{o}dinger equation with initial data $f$, the following estimates hold
$$\|e^{it\Delta}f\|_{L^{p}([0,T];L^{q}(\Omega))}\leq C\|f\|_{H^{s}(\Omega)},$$
provided that 
$$\frac{2}{p}+\frac{n}{q}=\frac{n}{2}-s.$$
For Dirichlet boundary conditions, the estimates hold with $T=\infty$.
\end{theorem}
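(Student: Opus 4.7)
Since this result is quoted from \cite{BSS10}, what follows is a sketch of the strategy one would naturally adopt to prove it from scratch. The overall approach is spectral-semiclassical, reducing everything to frequency-localized estimates.

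First, I would decompose $f$ via a Littlewood--Paley decomposition adapted to the Dirichlet (or Neumann) Laplacian on $\Omega$, writing $f=\sum_{\lambda\;\mathrm{dyadic}} P_\lambda f$ with $P_\lambda=\varphi(\sqrt{-\Delta}/\lambda)$ where $\varphi$ is supported in an annulus. Squarefunction estimates for the functional calculus of the boundary Laplacian on exterior domains (available from Gaussian heat kernel bounds, via a Mikhlin--H\"ormander multiplier theorem) reduce matters to a uniform frequency-localized Strichartz bound of the form
$$\|e^{it\Delta}P_\lambda f\|_{L^p_t L^q_x} \lesssim \lambda^{s} \|P_\lambda f\|_{L^2},$$
with the scaling $\tfrac{2}{p}+\tfrac{n}{q}=\tfrac{n}{2}-s$, which can then be summed in $\lambda$.

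For each dyadic piece, the natural time scale is $1/\lambda^2$. I would split spatially using a smooth cutoff $\chi$ supported in a fixed neighborhood of the obstacle and $1-\chi$ supported away. The far piece agrees with the full-space Schr\"odinger evolution up to a negligible error (using approximate finite speed of propagation for frequency-localized wave packets, controlled via Helffer--Sj\"ostrand-type parametrix), so the standard $\mathbb{R}^n$ Strichartz estimates on the admissible line $\tfrac{2}{p}+\tfrac{n}{q}=\tfrac{n}{2}-s$ apply directly. For the near piece the key input is local smoothing,
$$\bigl\|\chi\, e^{it\Delta}f\bigr\|_{L^2_t H^{1/2}_x}\lesssim \|f\|_{L^2},$$
which holds precisely because the obstacle is non-trapping (Morawetz-type multipliers, or equivalently Burq--G\'erard--Tzvetkov resolvent estimates with polynomial cutoff). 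Combined with a boundary parametrix that trades the half-derivative for $L^p_t L^q_x$ integrability in the admissible range, this yields the near-obstacle contribution. Summing the unit-scale pieces in time and using the Dirichlet heat kernel upper bounds globalizes to $T=\infty$ in the Dirichlet case.

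The main obstacle, and the reason for the stronger 2D condition $\tfrac{3}{p}+\tfrac{2}{q}\le 1$ as opposed to the Euclidean $\tfrac{1}{p}+\tfrac{1}{q}=\tfrac{1}{2}$, is the boundary behavior: glancing and gliding rays along $\partial\Omega$ obstruct any clean parametrix for $e^{it\Delta}$ in the boundary layer. The resulting diffractive loss is identical to the one first observed by Smith--Sogge for the wave equation Strichartz estimates on domains, and is transferred to the Schr\"odinger propagator via the subordination formula relating $e^{it\Delta}$ to $\cos(s\sqrt{-\Delta})$ (or equivalently via a direct spectral argument). Handling this loss cleanly in 2D, where Sobolev embedding is most delicate, is the technical heart of the argument.
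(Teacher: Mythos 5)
The paper does not prove this theorem; it is quoted verbatim from Blair--Smith--Sogge \cite{BSS10} and used as a black box. So there is no ``paper's own proof'' to compare against, and your sketch should be judged as an outline of the cited reference's argument.

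Your outline captures the correct high-level skeleton: Littlewood--Paley decomposition via the functional calculus of the Dirichlet Laplacian, reduction to a uniform frequency-localized bound on the admissible line, spatial near/far splitting, Euclidean Strichartz for the far piece, and local smoothing (with $1/2$-derivative gain, valid because the obstacle is non-trapping) to handle the exchange between the two regimes and to globalize in time. This is indeed the framework of \cite{BSS10} and of the earlier Burq--G\'erard--Tzvetkov / Staffilani--Tataru gluing arguments. However, two points deserve scrutiny. First, the phrase ``a boundary parametrix that trades the half-derivative for $L^p_t L^q_x$ integrability'' is precisely the nontrivial content of the cited theorem and cannot be invoked as a known tool; what \cite{BSS10} actually supply here are Strichartz estimates for the Schr\"odinger flow on a compact manifold with boundary, proved by a semiclassical wave-packet/square-function analysis near the boundary. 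Second, your claim that the $2$D loss ($\tfrac{3}{p}+\tfrac{2}{q}\le 1$) is ``transferred from the wave equation via the subordination formula relating $e^{it\Delta}$ to $\cos(s\sqrt{-\Delta})$'' is not how \cite{BSS10} proceed; they do not pass through the wave propagator, but carry out a direct Schr\"odinger analysis at semiclassical time scale $1/\lambda^2$, and the numerology of the loss emerges from their frequency-localized interior estimates rather than from subordination. Likewise, globalization to $T=\infty$ in the Dirichlet case comes from the global-in-time local smoothing estimate, not from heat-kernel bounds. As a ``proof from scratch'' the proposal thus leaves the central step unproved and mischaracterizes the mechanism of the diffractive loss, but as a road map to the reference it is serviceable.
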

\begin{rem}
 Remark that as an application to the nonlinear Schr\"{o}dinger equation in 3D exterior domains, the authors used their above result and interpolation 
to establish the $L^{4}_{t}L^{\infty}_{x}$ Strichartz estimate and present a simple proof to the well-posedness result for small energy data 
to the quintic nonlinear Schr\"{o}dinger equation, a result first obtained by Ivanovici and Planchon \cite{IP10}.
\end{rem}

We will use in this paper the Besov spaces which are defined here using the spectral localization associated to the domain. 
We refer to \cite{IPheat} for a detailed discussion 
and references, and we provide only basic definitions here. Let $\psi(\cdot)\in C_{0}^{\infty}(\mathbb{R}\setminus\{0\})$ and 
$\psi_{j}(\cdot)=\psi(2^{-2j}\cdot)$. On the domain $\Omega$, one has the spectral resolution of the Dirichlet Laplacian, and we may define 
smooth spectral projections $\Delta_{j}=\psi_{j}(-\Delta_{D})$ as continuous operators on $L^{2}$ (they are also 
continuous on $L^{p}$ for all $p$). Moreover, just like the whole space case, these projections obey Bernstein estimates.  
\begin{definition}
 Let $f\in\mathit{S}^{'}(\Omega)$ and let $\Delta_{j}=\psi(-2^{-2j}\Delta_{D})$ be a spectral localization with respect to the Dirichlet 
Laplacian $\Delta_{D}$ such that $\sum_{j}\Delta_{j}=Id$. 
We say $f$ belongs to $\dot{B}_{p}^{s,q}(\Omega)$
($s\in\mathbb{R}$, $1\leq p,q\leq+\infty$) if
$$\left(2^{js}\|\Delta_{j}f\|_{L^{p}}\right)\in l^{q},$$
and $\sum_{j}\Delta_{j}f$ converges to f in $\mathit{S}^{'}$.
\end{definition}
Note that $\dot B^{1,2}_{2}=\dot H^{1}_{0}$ and by analogy we set
$\dot H^s$ to be just $\dot B^{s,2}_{2}$. 
The Banach space $\dot{B}_{p}^{s,q}$ is equipped with following norm:
$$\|f\|_{\dot{B}_{p}^{s,q}}:=\left(\sum_{j\in\mathbb{Z}}\|2^{js}\Delta_{j}f\|_{L^{p}}^{q}\right)^{\frac{1}{q}}.$$ 
\begin{rem} In our range of interest, this intrinsic definition may be proved to be equivalent
  with the more well-known definition using the restriction to the
  domain $\Omega$ of functions in $\dot
  B^{s,q}_p(\mathbb{R}^n)$. However, we
  will not need this equivalence.
\end{rem}

We first obtain the following result:
\begin{theorem}\label{main result 1}
Let $\Omega$ be $\mathbb{R}^{2}\setminus V$, where $V$ is a non-trapping obstacle, and $u_{0}\in \dot{B}^{s_{c},1}_{2}(\Omega)$. Then, there exists
$T(u_{0})$ such that the nonlinear equation:
\begin{equation*}
\left\{
\begin{aligned}
&i\partial_{t}u+\Delta u=\pm|u|^{p-1}u\;\;x\in\Omega,\;t\in\mathbb{R},\;p> 4\\
&u|_{\mathbb{R}\times\partial\Omega}=0\\
&u(0,x)=u_{0}(x),
\end{aligned}
\right.
\end{equation*}
admits a unique solution $u$ in the function space 
$$C([0,T];\dot{B}^{s_{c},1}_{2}(\Omega))\cap L^{p-1}([0,T];L^{\infty}(\Omega)).$$ Moreover, if $u_{0}\in H_{0}^{1}(\Omega)$, then the 
solution stays in $H_{0}^{1}(\Omega)$ and it is global in time for the defocusing equation.
\end{theorem}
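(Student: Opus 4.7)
The plan is to run a Kato-style Picard iteration in the scale-invariant space
$$X_T \;:=\; C\bigl([0,T];\dot{B}^{s_c,1}_{2}(\Omega)\bigr)\,\cap\, L^{p-1}\bigl([0,T];L^{\infty}(\Omega)\bigr), \qquad s_c = 1-\tfrac{2}{p-1}\in(\tfrac{1}{3},1).$$
Theorem \ref{BSStheorem} does not directly reach $q=\infty$, but the hypothesis $p>4$ forces $3/(p-1)<1$, so that any sufficiently large finite $q$ yields an admissible pair $(p-1,q)$ with scaling index $s_q=s_c-2/q$. Applying the estimate frequency-by-frequency to the dyadic Dirichlet projection $\Delta_j f$ and gaining $L^q\hookrightarrow L^\infty$ by Bernstein at scale $2^j$,
$$\|\Delta_j e^{it\Delta}f\|_{L^{p-1}_t L^\infty_x}\;\lesssim\;2^{2j/q}\|\Delta_j e^{it\Delta}f\|_{L^{p-1}_t L^q_x}\;\lesssim\;2^{j s_c}\|\Delta_j f\|_{L^2},$$
and $\ell^1$-summation produces the scale-invariant bound $\|e^{it\Delta}f\|_{L^{p-1}_t L^\infty_x}\lesssim\|f\|_{\dot{B}^{s_c,1}_{2}}$. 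Writing the Duhamel term as $e^{it\Delta}\int_0^t e^{-is\Delta}F(s)\,ds$ and pulling the integral outside by Minkowski, together with the trivial isometric bound for the $L^\infty_t\dot{B}^{s_c,1}_{2}$ norm, then yields
$$\Bigl\|\int_0^t e^{i(t-s)\Delta}F(s)\,ds\Bigr\|_{L^{p-1}_t L^\infty_x\,\cap\, L^\infty_t \dot{B}^{s_c,1}_{2}}\;\lesssim\; \|F\|_{L^1_t \dot{B}^{s_c,1}_{2}}.$$

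The nonlinear input is a Moser-type bound $\||u|^{p-1}u\|_{\dot{B}^{s_c,1}_{2}}\lesssim \|u\|_{L^\infty}^{p-1}\|u\|_{\dot{B}^{s_c,1}_{2}}$, consistent with the regularity of $F(z)=|z|^{p-1}z$ since $0<s_c<1\leq p-1$. On an exterior domain this is the main technical obstacle: the non-integer power nonlinearity demands a fractional chain rule compatible with the spectral localization $\Delta_j=\psi(-2^{-2j}\Delta_D)$, which I would implement via the heat-semigroup characterization of $\dot{B}^{s,q}_{p}(\Omega)$ used in \cite{IPheat} together with a Bony-type paraproduct decomposition intrinsic to $\Delta_D$. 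Hölder in time then gives the key estimate $\||u|^{p-1}u\|_{L^1_T \dot{B}^{s_c,1}_{2}}\lesssim \|u\|_{L^{p-1}_T L^\infty_x}^{p-1}\|u\|_{L^\infty_T \dot{B}^{s_c,1}_{2}}$ and an analogous bound for $F(u)-F(v)$. The Picard map $\Phi(u)(t)=e^{it\Delta}u_0\mp i\int_0^t e^{i(t-s)\Delta}(|u|^{p-1}u)(s)\,ds$ is then shown to contract on a ball $\{u\in X_T:\|u\|_{L^\infty_T\dot{B}^{s_c,1}_{2}}\leq 2C\|u_0\|_{\dot{B}^{s_c,1}_{2}},\;\|u\|_{L^{p-1}_T L^\infty_x}\leq \eta\}$ for $\eta$ small in terms of $\|u_0\|_{\dot{B}^{s_c,1}_{2}}$. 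The required smallness $\|e^{it\Delta}u_0\|_{L^{p-1}_T L^\infty_x}\leq \eta/2$ is obtained by splitting the frequency sum at a large $N$: the high-frequency tail becomes $o(1)$ as $N\to\infty$ globally in time by the Strichartz bound above, while each low-frequency block contributes $O\bigl(T^{1/(p-1)}\bigr)$ by Bernstein. Uniqueness in $X_T$ and continuity of the flow follow from the same difference estimate.

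For $u_0\in H^1_0(\Omega)$ one has the embedding $H^1_0\hookrightarrow \dot{B}^{s_c,1}_{2}$ (a Cauchy-Schwarz argument between the $\ell^2$-summability of $\|\Delta_j u_0\|_{L^2}$ weighted by $\sqrt{1+2^{2j}}$ and the sum $\sum_j 2^{2js_c}/(1+2^{2j})$, which converges precisely because $0<s_c<1$), so the theorem of the first part produces a solution $u\in X_T$. Persistence of the $H^1_0$-regularity is a subcritical application of the same Strichartz/Moser machinery at Sobolev index $s=1$ with $\ell^2$-summation (recalling $\dot{H}^1_0=\dot{B}^{1,2}_{2}$): on the life-span of the $X_T$-solution one closes the estimate $\|u\|_{L^\infty_T H^1_0}\leq \|u_0\|_{H^1_0}+C\|u\|_{L^{p-1}_T L^\infty_x}^{p-1}\|u\|_{L^\infty_T H^1_0}$ by taking $T$ small. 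Finally, in the defocusing case both terms of the energy $E(u_0)$ are non-negative, so $\|\nabla u(t)\|_{L^2}^2\leq 2E(u_0)$ on the maximal interval of existence; combined with conservation of mass this precludes any $H^1_0$-blow-up and the solution extends globally in time.
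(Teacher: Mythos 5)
Your proposal is correct and follows essentially the same route as the paper: dyadic application of the Blair--Smith--Sogge $L^{p-1}_tL^q_x$ estimate combined with Bernstein to reach $L^\infty_x$, $\ell^1$-summation to get the $\dot{B}^{s_c,1}_2$-Strichartz bound, a Moser/paraproduct estimate adapted to the Dirichlet spectral localization, a contraction in $C_T\dot{B}^{s_c,1}_2\cap L^{p-1}_TL^\infty$, persistence of $H^1_0$ regularity via the same scheme at $s=1$, and globalization from conservation laws. The minor cosmetic differences (iterating directly on $u$ rather than on $w=u-e^{it\Delta}u_0$, the explicit frequency-splitting argument for the smallness of the free-evolution norm, and Cauchy--Schwarz instead of interpolation for $H^1_0\hookrightarrow\dot{B}^{s_c,1}_2$) are equivalent reformulations, not a different method.
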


Then, we prove the scattering for the defocusing equation with initial data in $H_{0}^{1}(\Omega)$ 
for a class of almost star-shaped obstacles 
satisfying the following geometric condition: Given $0<\epsilon\leq1$
 \begin{equation}\label{The geometric-condition}
(x_{1},\epsilon x_{2})\cdot n_{x} > 0\;\;\mbox{for}\;\;x=(x_{1},x_{2})\in\partial V 
\end{equation}
where $n_{x}$ is the exterior unit normal to $\partial V$.

\begin{rem}
 In fact, for $\epsilon=1$, which corresponds to the star-shaped case, we don't need the strictness in \eqref{The geometric-condition} 
(see \cite{PV2D}).
\end{rem}

Almost star-shaped obstacles that are a natural generalization of the star-shaped were introduced by Ivrii in \cite{Ivrii} in the setting of local energy decay for the linear wave equation. 
In section \ref{Geometry}, we provide an explicit definition for such obstacles as well as an interpretation of the geometric condition 
\eqref{The geometric-condition}.\vspace{2mm}\newline
We obtain the following theorem:

\begin{theorem}\label{mainresult}
Let $\Omega$ be $\mathbb{R}^{2}\setminus V$, where $V$ is an almost star-shaped obstacle satisfying the condition \eqref{The geometric-condition}, 
and $u_{0}\in H_{0}^{1}(\Omega)$. Then the global solution for the defocusing 
equation
\begin{equation*}
\left\{
\begin{aligned}
&i\partial_{t}u+\Delta u=|u|^{p-1}u\;\;x\in\Omega,\;t\in\mathbb{R},\;p> 4\\
&u|_{\mathbb{R}\times\partial\Omega}=0\\
&u(0,x)=u_{0}(x),
\end{aligned}
\right.
\end{equation*}
scatters in $H_{0}^{1}$. 
\end{theorem}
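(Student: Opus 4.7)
The plan is to follow the strategy of Planchon--Vega \cite{PV2D}, adapting it from the star-shaped case to the almost star-shaped setting singled out by \eqref{The geometric-condition}. Global existence in $H_{0}^{1}(\Omega)$ is already furnished by Theorem \ref{main result 1}, so the heart of the matter is proving a global space-time bound strong enough to imply scattering. Concretely, I would aim for the Strichartz-type bound $\|u\|_{L^{p-1}_{t}(\mathbb{R};L^{\infty}_{x}(\Omega))}\leq C(\|u_{0}\|_{H_{0}^{1}})$, from which scattering in $H_{0}^{1}$ follows by a now-standard Duhamel/Strichartz partition argument that splits $\mathbb{R}$ into finitely many intervals on which the nonlinear term is small and reads off the asymptotic states $u_{\pm}$.

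The core step is a quadrilinear interaction Morawetz inequality. First, pass to the tensor product $v(t,x,y)=u(t,x)u(t,y)$ on $\Omega\times\Omega\subset\mathbb{R}^{4}$, where the sign of the bilaplacian becomes favourable, and apply a Morawetz-type multiplier with weight $a(x,y)$. Whereas \cite{PV2D} uses a weight radial in $|x-y|$ and relies on star-shapedness of $V$ to discard the boundary contributions on $\partial V\times\Omega$ and $\Omega\times\partial V$, the hypothesis \eqref{The geometric-condition} is precisely star-shapedness of $V$ in the rescaled coordinates $(x_{1},\epsilon x_{2})$. I would therefore choose $a(x,y)=\bigl((x_{1}-y_{1})^{2}+\epsilon^{2}(x_{2}-y_{2})^{2}\bigr)^{1/2}$ (or a symmetrised variant), so that after integration by parts the boundary integrand on $\partial V\times\Omega$ factors pointwise through $(x_{1},\epsilon x_{2})\cdot n_{x}$, which is strictly positive by \eqref{The geometric-condition}. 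Up to $\epsilon$-dependent constants, the interior term $-\Delta^{2}_{(x,y)}a$ should remain nonnegative as in the isotropic $\mathbb{R}^{4}$ case, so the boundary terms may be dropped and the interior positivity produces an $L^{2}_{t,x}$ bound on an anisotropic half-derivative of $|u|^{2}$.

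Once this Morawetz estimate is in hand, I would combine it with the local smoothing of \cite{PV2D} (which is essentially geometry-independent, the sign obstruction having been cured by tensorisation) to recover, exactly as in \cite{PV2D}, the target $L^{p-1}_{t}L^{\infty}_{x}$ bound on $u$. Plugging this into the Strichartz framework underlying Theorem \ref{main result 1} and comparing $u(t)$ with $e^{it\Delta}u_{\pm}$ via Duhamel then yields $\|u(t)-e^{it\Delta}u_{\pm}\|_{H_{0}^{1}}\to 0$ as $t\to\pm\infty$, which is the desired scattering conclusion.

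The principal obstacle is the Morawetz step itself: one must verify \emph{simultaneously} that the anisotropic weight keeps $-\Delta^{2}_{(x,y)}a\geq 0$ on the diagonal-free part of $\Omega\times\Omega$ and that the boundary integrand at $\partial V\times\Omega$ is proportional, with the favourable sign, to $(x_{1},\epsilon x_{2})\cdot n_{x}$ as in \eqref{The geometric-condition}. The cross terms generated by the non-radial dependence of $a$ demand careful bookkeeping, and this is where the assumption $\epsilon\leq 1$ and the \emph{strict} inequality in \eqref{The geometric-condition} are expected to play a role: the former to control the sign of the bilaplacian after the anisotropic rescaling, and the latter to absorb lower-order boundary perturbations that are no longer automatically zero, precisely as the remark following \eqref{The geometric-condition} already suggests.
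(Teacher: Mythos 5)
Your strategy correctly identifies the high-level scheme (tensorize, pick a Morawetz weight adapted to the anisotropy of \eqref{The geometric-condition}, drop the boundary terms, recover a space--time bound, close scattering by interval decomposition). The sign hypothesis at the heart of your argument, however, is false, and the paper explicitly flags this and works around it in a way your outline does not contain.

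You assert that after a single tensorization to $\Omega\times\Omega\subset\mathbb{R}^{4}$, the anisotropic weight $a(x,y)=\bigl((x_{1}-y_{1})^{2}+\epsilon^{2}(x_{2}-y_{2})^{2}\bigr)^{1/2}$ (or an analogous $\sqrt{x_{1}^{2}+\epsilon x_{2}^{2}+y_{1}^{2}+\epsilon y_{2}^{2}}$) keeps $-\Delta^{2}a\geq 0$ ``as in the isotropic $\mathbb{R}^{4}$ case.'' That is precisely the step that fails. For a weight $H=\sqrt{x_{1}^{2}+\dots+x_{n}^{2}+\epsilon(x_{n+1}^{2}+\dots+x_{2n}^{2})}$ on $\mathbb{R}^{2n}$, the paper's Remark 3.8 computes $\Delta^{2}H=A/H^{3}+\dots$ with $A=-n(n+2)\epsilon^{2}-2n(n-3)\epsilon-n^{2}+4n-3$. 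At $n=2$ (your 4D case) this gives $A=-8\epsilon^{2}+4\epsilon+1$, which is positive for small $\epsilon$, so $\Delta^{2}a$ does \emph{not} have the required sign uniformly in $0<\epsilon<1$. This is why the paper tensorizes a second time: in $U(x,y,z,w)=u(x)u(y)u(z)u(w)$ on $(\Omega)^{4}\subset\mathbb{R}^{8}$, the anisotropic weight corresponds to $n=4$, for which $A,B<0$ for all $0<\epsilon<1$, and only then is the bilaplacian term harmless.

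There is a second mismatch. Your weight is anisotropic in the \emph{difference} variable $x-y$, so at $x\in\partial V$ the boundary term is proportional to $(x_{1}-y_{1},\,\epsilon^{2}(x_{2}-y_{2}))\cdot n_{x}$, which is not of the form $(x_{1},\epsilon x_{2})\cdot n_{x}$ appearing in \eqref{The geometric-condition}, so the favourable boundary sign does not come for free. The paper separates these two requirements: it first proves two auxiliary boundary-control estimates (Propositions 3.9 and 3.11) using weights such as $\rho_{1}=\sqrt{x_{1}^{2}+y_{1}^{2}+z_{1}^{2}+w_{1}^{2}+\epsilon(x_{2}^{2}+y_{2}^{2}+z_{2}^{2}+w_{2}^{2})}$ that are anisotropic in each variable separately (giving the desired $(x_{1},\epsilon x_{2})\cdot n_{x}$ factor at the boundary and a negative bilaplacian in 8D), and only afterwards runs the Morawetz identity with a fully isotropic difference-type weight $\rho=|X-Y|+|X'+Y|+|X'-Y|+|X+Y|$ (which produces the $D^{-1/2}(|v|^{2})$ bound); the boundary terms from this isotropic weight do not have a sign, and are instead \emph{absorbed} via the auxiliary estimates after a careful expansion of $\nabla_{x}\rho$. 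None of this machinery appears in your outline, and without the double tensorization and the auxiliary boundary lemmas the argument does not close.
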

{\bf Acknowledgements}. I would like to thank Fabrice Planchon for suggesting the problem and commenting on the manuscript.

\section{Proof of the local and global existence (Theorem \ref{main result 1})}
We want to solve 
\begin{align}\label{p4schrodinger}
&i\partial_{t}u+\Delta u=\pm|u|^{p-1}u\;\;in\;\;\Omega=\mathbb{R}^{2}\setminus V,\;\;p>4\notag\\
&u|_{\mathbb{R}\times\partial\Omega}=0\\
&u(0,x)=u_{0}(x)\notag
\end{align}
We will set $p=\frac{3}{1-\epsilon_{0}}+1$ with $0<\epsilon_{0}<1$.\newline\newline
Note that the Sobolev space with the invariant norm under the scaling \eqref{scaling} 
is $\dot{H}^{s_{c}}$ with $s_{c}=\frac{1}{3}+\frac{2\epsilon_{0}}{3}$.\newline
Using the estimate obtained by Blair, Smith, and Sogge (Theorem \ref{BSStheorem}), we can obtain another linear estimate in the Besov space 
$\dot{B}^{s_{c},1}_{2}$. This is stated in the following proposition:

 \begin{proposition}\label{Lp-1Linfty}
    Let $\Omega=\mathbb{R}^{2}\setminus V$, where $V$ is a
    non-trapping obstacle with smooth boundary, and $\Delta$ is the
    Dirichlet Laplacian.  Then for $e^{it\Delta}f$ solution to the
    linear Schr\"{o}dinger equation with initial data $f$, we have
    \begin{equation}\label{linearestimate}
      \|e^{it\Delta}f\|_{L^{\frac{3}{1-\epsilon_{0}}}([0,+\infty];L^{\infty}(\Omega)}\lesssim\|f\|_{\dot{B}_{2}^{s_{c},1}(\Omega)}
    \end{equation}
  \end{proposition}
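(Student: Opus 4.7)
The plan is to obtain the $L^{\tilde p_0}_t L^\infty_x$ estimate by Littlewood--Paley decomposition, applying Theorem \ref{BSStheorem} at each dyadic piece at an admissible $(p,q)$ pair, then upgrading to $L^\infty_x$ via Bernstein, and finally summing over frequencies with the $\ell^1$ summability built into the Besov norm $\dot B^{s_c,1}_2$.

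More precisely, write $\tilde p_0 = \frac{3}{1-\epsilon_0}$ (so that $p = \tilde p_0 + 1$), and select the dual spatial exponent $q$ by saturating the Blair--Smith--Sogge condition in $n=2$: $\frac{3}{\tilde p_0}+\frac{2}{q}=1$, which forces $q=2/\epsilon_0$. The regularity index produced by Theorem \ref{BSStheorem} is then
$$s \;=\; 1-\tfrac{2}{\tilde p_0}-\tfrac{2}{q} \;=\; 1-\tfrac{2(1-\epsilon_0)}{3}-\epsilon_0 \;=\;\tfrac{1-\epsilon_0}{3}.$$
Applied to the frequency-localized data $\Delta_j f$ (using that $\Delta_j$ commutes with $e^{it\Delta}$, together with the Bernstein-type characterization $\|\Delta_j g\|_{\dot H^{s}}\sim 2^{js}\|\Delta_j g\|_{L^2}$ valid for the Dirichlet Laplacian on $\Omega$), this gives
$$\|e^{it\Delta}\Delta_j f\|_{L^{\tilde p_0}_t L^{q}_x}\;\lesssim\;2^{j(1-\epsilon_0)/3}\,\|\Delta_j f\|_{L^2}.$$

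Next I would invoke the Bernstein inequality $\|\Delta_j g\|_{L^\infty}\lesssim 2^{2j/q}\|\Delta_j g\|_{L^q}=2^{j\epsilon_0}\|\Delta_j g\|_{L^q}$, valid for spectral projectors attached to $-\Delta_D$ as recalled just before the definition of the Besov spaces. Combining this with the previous display yields
$$\|e^{it\Delta}\Delta_j f\|_{L^{\tilde p_0}_t L^{\infty}_x}\;\lesssim\;2^{j(\epsilon_0+(1-\epsilon_0)/3)}\|\Delta_j f\|_{L^2}\;=\;2^{js_c}\|\Delta_j f\|_{L^2},$$
since $\epsilon_0+\tfrac{1-\epsilon_0}{3}=\tfrac{1+2\epsilon_0}{3}=s_c$.

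Finally, write $f=\sum_j \Delta_j f$ and apply the triangle inequality in $L^{\tilde p_0}_t L^\infty_x$ (which is legitimate because both exponents are $\geq 1$) to sum over $j\in\mathbb{Z}$:
$$\|e^{it\Delta}f\|_{L^{\tilde p_0}_t L^{\infty}_x}\;\leq\;\sum_{j}\|e^{it\Delta}\Delta_j f\|_{L^{\tilde p_0}_t L^{\infty}_x}\;\lesssim\;\sum_j 2^{js_c}\|\Delta_j f\|_{L^2}\;=\;\|f\|_{\dot B^{s_c,1}_2(\Omega)}.$$
I expect no real obstacle here beyond the purely numerical bookkeeping; the one point that deserves care is ensuring that Bernstein-type inequalities for the spectral projectors $\Delta_j=\psi_j(-\Delta_D)$ on the non-trapping exterior domain $\Omega$ hold uniformly, which is exactly the fact recalled in the discussion preceding the definition of $\dot B^{s,q}_p(\Omega)$ (with a reference to \cite{IPheat}). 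The essential use of the $\ell^1$ summability in the Besov norm, rather than $\ell^2$, is what allows the crude triangle-inequality summation and hence the loss-free passage from a Strichartz pair of admissible type to the endpoint $L^\infty_x$ norm that is needed for the nonlinear iteration in Theorem \ref{main result 1}.
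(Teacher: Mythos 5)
Your proposal reproduces the paper's proof essentially step for step: apply the Blair--Smith--Sogge estimate with the exponent pair $\bigl(\tfrac{3}{1-\epsilon_0},\tfrac{2}{\epsilon_0}\bigr)$ to each spectral block $\Delta_j f$, upgrade to $L^\infty_x$ via Bernstein at the cost of $2^{j\epsilon_0}$, and sum the resulting factor $2^{js_c}$ over $j$ using the $\ell^1$ structure of $\dot B^{s_c,1}_2$. The only cosmetic difference is that the paper first records the non-localized estimate \eqref{B-S-S} for a general $\epsilon\in(0,1)$ before specializing to dyadic blocks and $\epsilon=\epsilon_0$; the numerology and final bound are identical.
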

  \begin{proof}
    For exterior domains in $\mathbb{R}^{2}$ and given any
    $0<\epsilon<1$, we have the following Strichartz estimate obtained
    by Blair, Smith, and Sogge
    \begin{equation}\label{B-S-S}
      \|e^{it\Delta}f\|_{L^{\frac{3}{1-\epsilon}}_{t}L^{\frac{2}{\epsilon}}_{x}}\leq C(\epsilon)\|f\|_{\dot{H}^{\frac{1}{3}(1-\epsilon)}}
    \end{equation}
    On a dyadic block $\Delta_{j}f$, where $\Delta_{j}$ is defined via
    the Dirichlet Laplacian $\Delta$, the Blair-Smith-Sogge estimate
    is written as follows
    \begin{equation}\label{BSSdyadic}
      \|\Delta_{j}(e^{it\Delta}f)\|_{L^{\frac{3}{1-\epsilon}}_{t}L^{\frac{2}{\epsilon}}_{x}}\lesssim 2^{j\frac{1-\epsilon}{3}}\|\Delta_{j}f\|_{L^{2}}
    \end{equation}
    for any $0<\epsilon<1$.  This can be easily obtained using
    \eqref{B-S-S} and the fact that $\Delta_{j}$ commutes with
    $e^{it\Delta}$ as well as a Bernstein's inequality.\newline Now,
    we choose $\epsilon=\epsilon_{0}$, we have
$$2^{\epsilon_{0}j}\|\Delta_{j}(e^{it\Delta}f)\|_{L^{\frac{3}{1-\epsilon_{0}}}_{t}L^{\frac{2}{\epsilon_{0}}}_{x}}\lesssim 2^{j\frac{1+2\epsilon_{0}}{3}}\|\Delta_{j}f\|_{L^{2}}$$
But by Bernstein we have,
$$\|\Delta_{j}(e^{it\Delta}f)\|_{L^{\infty}_{x}}\lesssim2^{j\epsilon_{0}}\|\Delta_{j}(e^{it\Delta}f)\|_{L^{\frac{2}{\epsilon_{0}}}_{x}}$$
hence
\begin{align*}
  \|e^{it\Delta}f\|_{L^{\frac{3}{1-\epsilon_{0}}}_{t}L^{\infty}_{x}}&\leq\sum_{j}\|\Delta_{j}(e^{it\Delta}f)\|_{L^{\frac{3}{1-\epsilon_{0}}}_{t}L^{\infty}_{x}}\\
  &\lesssim\sum_{j}2^{j\epsilon_{0}}\|\Delta_{j}(e^{it\Delta}f)\|_{L^{\frac{3}{1-\epsilon_{0}}}_{t}L^{\frac{2}{\epsilon_{0}}}_{x}}\\
  &\lesssim
  \sum_{j}2^{j\frac{1+2\epsilon_{0}}{3}}\|\Delta_{j}f\|_{L^{2}}\left(=\left\|f\right\|_{\dot{B}_{2}^{s_{c},1}}\right)
\end{align*}
Hence we get the following linear estimate
$$\left\|e^{it\Delta}f\right\|_{L^{\frac{3}{1-\epsilon_{0}}}_{t}L^{\infty}_{x}}\lesssim\|f\|_{\dot{B}_{2}^{s_{c},1}}$$
which ends the proof of Proposition \ref{Lp-1Linfty}.
\end{proof}

Now, using the estimate \eqref{linearestimate}, we can solve the
nonlinear equation \eqref{p4schrodinger} with initial data in
$\dot{B}_{2}^{s_{c},1}$ locally in time in the function space $E_{T}$
given by: for $T>0$
$$E_{T}=C([0,T];\dot{B}_{2}^{s_{c},1}(\Omega))\cap L^{\frac{3}{1-\epsilon_{0}}}([0,T];L^{\infty}(\Omega)).$$ 

Set $F(x)=|x|^{\frac{3}{1-\epsilon_{0}}}x$ (or
$-|x|^{\frac{3}{1-\epsilon_{0}}}x$ in the focusing case) and choose
$T$ small enough so that
$\|e^{it\Delta}u_{0}\|_{L_{[0,T]}^{\frac{3}{1-\epsilon_{0}}}L^{\infty}_{x}}<c$
for a small constant $c$ to be determined and which is linked to the size of the Besov norm of $u_0$. The larger the
latter is, the smaller the former will have to be.

\begin{rem}
 Remark that the smallness of this quantity can be made explicit if $u_{0}$ 
is in $H^{1}$ (not just $\dot{B}_{2}^{s_{c},1}$), and then $T$ will be like an inverse power of the norm $\dot{H}^{1}$ of $u_{0}$ (see for example page 22 of 
\cite{BP04} for a similar reasoning). Moreover, for the defocusing case,
the $H^{1}$ norm is controlled and thus the local time of existence is uniform and one can consequently iterate the local existence result to a global 
result.
\end{rem}

We define the following mapping for $w\in E_{T}$
$$\phi(w)(t):=\int_{s<t}e^{i(t-s)\Delta}F(e^{is\Delta}u_{0}+w(s))ds$$
then we have
\begin{align}\label{BtoB}
\|\phi(w)\|_{E_{T}}&\lesssim\|F(e^{it\Delta}u_{0}+w)\|_{L^{1}([0,T];\dot{B}^{s_{c},1}_{2})}\\
&\lesssim\|e^{it\Delta}u_{0}+w\|_{L^{\infty}_{T}\dot{B}_{2}^{s_{c},1}}\|e^{it\Delta}u_{0}+w\|_{{ L^{\frac{3}{1-\epsilon_{0}}}_{T}L^{\infty}_{x}}}^{\frac{3}{1-\epsilon_{0}}}\notag
\end{align}
The first part can be shown using the linear estimate \eqref{linearestimate}, as for the second part, it is due to the following lemma (for the 
special case $f=e^{it\Delta}u_{0}+w$ and $g=0$): 
\begin{lemma}\label{besov}
Consider $f,g\in L^{\infty}_{T}\dot{B}_{p}^{s,q}\cap L^{\alpha-1}_{T}L^{\infty}_{x}$ with $0<s<2$, 
then if $F(x)=|x|^{\alpha-1}x$ (or $|x|^{\alpha}$) and $\alpha\geq 3$ we 
have
\begin{align*}
\|F(f)-&F(g)\|_{L^{1}_{T}\dot{B}^{s,q}_{p}}\lesssim\|f-g\|_{L^{\infty}_{T}\dot{B}_{p}^{s,q}}(\|f\|^{\alpha-1}_{ L^{\alpha-1}_{T}L^{\infty}_{x}}+\|g\|^{\alpha-1}_{ L^{\alpha-1}_{T}L^{\infty}_{x}})\\
&+\|f-g\|_{ L^{\alpha-1}_{T}L^{\infty}_{x}}(\|f\|_{L^{\infty}_{T}\dot{B}_{p}^{s,q}}\|f\|_{ L^{\alpha-1}_{T}L^{\infty}_{x}}^{\alpha-2}+
\|g\|_{L^{\infty}_{T}\dot{B}_{p}^{s,q}}\|g\|_{L^{\alpha-1}_{T}L^{\infty}_{x}}^{\alpha-2})
\end{align*}
\end{lemma}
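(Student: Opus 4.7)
The plan is to reduce the difference $F(f)-F(g)$ to a product and then combine a Besov-space Leibniz rule with a Moser-type chain rule for $F'$, closing with H\"older in time. First I would write
$$F(f)-F(g)=(f-g)\,G(f,g),\qquad G(f,g):=\int_{0}^{1}F'\bigl(g+\tau(f-g)\bigr)\,d\tau,$$
so that the estimate splits into two independent ingredients: controlling $f-g$ and controlling the ``averaged'' nonlinearity $G(f,g)$, which behaves pointwise like $(|f|+|g|)^{\alpha-1}$ since $|F'(z)|\lesssim|z|^{\alpha-1}$.

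The core step is to apply, pointwise in time, the Besov product estimate
$$\|uv\|_{\dot B^{s,q}_{p}(\Omega)}\lesssim \|u\|_{L^\infty}\|v\|_{\dot B^{s,q}_{p}}+\|v\|_{L^\infty}\|u\|_{\dot B^{s,q}_{p}},$$
which is available for $0<s<2$ on the exterior domain via the spectral/paraproduct framework of \cite{IPheat}. Taking $u=f-g$ and $v=G(f,g)$ yields
\begin{align*}
\|F(f)-F(g)\|_{\dot B^{s,q}_{p}} &\lesssim \|f-g\|_{\dot B^{s,q}_{p}}\,\|G(f,g)\|_{L^\infty}\\
&\quad+\|f-g\|_{L^\infty}\,\|G(f,g)\|_{\dot B^{s,q}_{p}}.
\end{align*}
The $L^\infty$ bound $\|G(f,g)\|_{L^\infty}\lesssim\|f\|_{L^\infty}^{\alpha-1}+\|g\|_{L^\infty}^{\alpha-1}$ is immediate. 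For the Besov norm of $G$ I would invoke the Moser composition estimate
$$\|F'(h)\|_{\dot B^{s,q}_{p}}\lesssim \|h\|_{L^\infty}^{\alpha-2}\|h\|_{\dot B^{s,q}_{p}},\qquad 0<s<2,$$
which follows, via finite-difference characterizations of the Besov norm together with the pointwise bound $|F''(z)|\lesssim|z|^{\alpha-2}$ (using $\alpha\geq 3$ so $F'\in C^{1}$). Integrating in $\tau$ and applying the triangle inequality gives $\|G(f,g)\|_{\dot B^{s,q}_{p}}\lesssim (\|f\|_{L^\infty}+\|g\|_{L^\infty})^{\alpha-2}(\|f\|_{\dot B^{s,q}_{p}}+\|g\|_{\dot B^{s,q}_{p}})$.

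The final step is to integrate in time. For the first piece, pulling $\|f-g\|_{\dot B^{s,q}_{p}}$ out in $L^\infty_T$ and using $\int_0^T(\|f\|_{L^\infty_x}+\|g\|_{L^\infty_x})^{\alpha-1}\,dt$ produces exactly the first right-hand-side term. For the second piece, I split the time exponent via $1=\tfrac{1}{\alpha-1}+\tfrac{\alpha-2}{\alpha-1}$: one factor of $L^{\alpha-1}_T$ is absorbed by $\|f-g\|_{L^\infty_x}$ and the remaining $\alpha-2$ factors by the $L^\infty_x$ norms inside $G$, while the Besov factor $\|f\|_{\dot B^{s,q}_{p}}$ (or $\|g\|_{\dot B^{s,q}_{p}}$) is placed in $L^\infty_T$. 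This reproduces the second right-hand-side term.

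The main obstacle is the Besov composition estimate on an exterior domain: because the Dirichlet spectral projections $\Delta_{j}$ do not commute with pointwise multiplication as they do on $\mathbb{R}^{n}$, establishing both the product rule and the chain rule at the level of $\dot B^{s,q}_{p}(\Omega)$ requires the heat-flow/paraproduct machinery of \cite{IPheat}. This is precisely the step that forces the upper bound $s<2$, since only first derivatives of $F$ enter effectively and higher regularity would demand smoother nonlinearities than $|x|^{\alpha-1}x$.
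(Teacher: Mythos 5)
Your proposal matches the paper's approach: write $F(f)-F(g)=(f-g)\int_0^1 F'(\theta f+(1-\theta)g)\,d\theta$, decompose the resulting product via the spectral paraproducts in $\dot B^{s,q}_p(\Omega)$ — which is exactly the content of Lemma 4.10 of \cite{IP10} applied with $r=\infty$ — and close with H\"older in time. One small caveat: the finite-difference characterization you invoke to justify the Moser/chain-rule step is a tool that is available only on $\mathbb{R}^n$; on the exterior domain both the product rule and the composition estimate must run entirely through the spectral-localization paraproduct machinery of \cite{IPheat}, a point you correctly flag in your final paragraph but phrase at odds with the intermediate step where finite differences are claimed to suffice.
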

\begin{proof}
 This lemma can be proved by writing 
$$F(f)-F(g)=(f-g)\int_{0}^{1}F^{'}(\theta f+(1-\theta)g)d\theta,$$
and splitting this difference into two paraproducts. For a detailed proof, we refer to Lemma 4.10 in \cite{IP10} which is given for functions in $\dot{B}_{p}^{s,q}\cap L^{r}$. In fact, we are considering a special case of that 
lemma with $r=\infty$. The time norms are harmless and can be easily inserted using H\"{o}lder. 
Note that such a result is by now classical if the domain is just $\mathbb{R}^{n}$, 
and where the easiest path to prove it is to use the characterization 
of Besov spaces using finite differences. By contrast, on domains, \cite{IP10} 
provides a direct proof using paraproducts which are based on the
spectral localization.
\end{proof}
\vspace{3mm}
Choosing the small constant $c$ such that $c^{\frac{3}{1-\epsilon_{0}}}\|u_{0}\|_{\dot{B}^{s_{c},1}_{2}}<<1$, the estimate 
\eqref{BtoB} shows that one can have a small ball in $w$ of $E_{T}$ that maps into itself. 
A similar argument on $\|\phi(w)-\phi(w')\|_{E_{T}}$ for $w'\in E_{T}$ shows that $\phi$ is a contraction on the small ball: 
by Lemma \ref{besov} (with $\alpha=\frac{3}{1-\epsilon_{0}}+1$), if $u=e^{it\Delta}u_{0}+w$ and $v=e^{it\Delta}u_{0}+w'$
\begin{align*}
\|\phi(w)-\phi(w')\|_{E_{T}}&\lesssim\|F(u)-F(v)\|_{L^{1}([0,T];\dot{B}^{s_{c},1}_{2})}\\
&\lesssim\|w-w'\|_{L^{\infty}_{T}\dot{B}_{2}^{s_{c},1}}(\|u\|^{\alpha-1}_{ L^{\alpha-1}_{T}L^{\infty}_{x}}+\|v\|^{\alpha-1}_{ L^{\alpha-1}_{T}L^{\infty}_{x}})\\
&+\|w-w'\|_{ L^{\alpha-1}_{T}L^{\infty}_{x}}(\|u\|_{L^{\infty}_{T}\dot{B}_{2}^{s_{c},1}}\|u\|_{ L^{\alpha-1}_{T}L^{\infty}_{x}}^{\alpha-2}+
\|v\|_{L^{\infty}_{T}\dot{B}_{2}^{s_{c},1}}\|v\|_{L^{\alpha-1}_{T}L^{\infty}_{x}}^{\alpha-2})
\end{align*}
Note that the smallness comes from the $||\cdot||^{\alpha-k}$ factors,
with $k=1,2$. 
Hence, by the fixed point theorem, there exists a unique $w$ in the small ball such that $\phi(w)=w$ and thus $u$ set as $u=e^{it\Delta}u_{0}+w$ is a solution 
to the nonlinear Schr\"{o}dinger equation \eqref{p4schrodinger} that satisfies 
\begin{equation}\label{duhamel}
u=e^{it\Delta}u_{0}+\int_{s<t}e^{i(t-s)\Delta}F(u(s))ds.
\end{equation}
\newline
Now, we will show that if the initial data $u_{0}\in H_{0}^{1}$, then the solution $u$ remains in $H_{0}^{1}$. In fact, if $u_{0}\in H_{0}^{1}$ then 
$u_{0}\in L^{2}=\dot{B}_{2}^{0,2}$ and $u_{0}\in \dot{H}^{1}=\dot{B}_{2}^{1,2}$ (from now on $\dot{H}^{1}$ will always correspond to 
$\dot{H}^{1}_{0}$). 
Using the following interpolation inequality
$$\|u_{0}\|_{\dot{B}_{2}^{s_{c},1}}\lesssim\|u_{0}\|^{s_{c}}_{\dot{B}_{2}^{1,\infty}}\|u_{0}\|^{1-s_{c}}_{\dot{B}_{2}^{0,\infty}}$$
and the fact that 
$$\|u_{0}\|_{\dot{B}_{2}^{1,\infty}}\leq\|u_{0}\|_{\dot{B}_{2}^{1,2}}$$
and 
$$\|u_{0}\|_{\dot{B}_{2}^{0,\infty}}\leq\|u_{0}\|_{\dot{B}_{2}^{0,2}}$$ 
we get that
$$\|u_{0}\|_{\dot{B}_{2}^{s_{c},1}}\lesssim\|u_{0}\|_{\dot{H}^{1}}^{s_{c}}\|u_{0}\|^{1-s_{c}}_{L^{2}}$$
Thus $u_{0}\in\dot{B}^{s_{c},1}_{2}$ and the nonlinear equation \eqref{p4schrodinger} with initial data $u_{0}\in H_{0}^{1}(\Omega)$ has a local solution in $E_{T}$ given by the Duhamel formula \eqref{duhamel}. Hence, we have
$$\|u\|_{C_{T}\dot{H}^{1}}\leq\|u_{0}\|_{\dot{H}^{1}}+\||u|^{\frac{3}{1-\epsilon_{0}}}u\|_{L^{1}_{T}\dot{H}^{1}}\leq\|u_{0}\|_{\dot{H}^{1}}+\|u\|^{\frac{3}{1-\epsilon_{0}}}_{L_{T}^{\frac{3}{1-\epsilon_{0}}}L^{\infty}_{x}}\|u\|_{L^{\infty}_{T}\dot{H}^{1}}$$
where the nonlinearity is again dealt with by Lemma \ref{besov} (with $s=1$, $p=q=2$). We also have 
$$\|u\|_{C_{T}L^{2}_{x}}\leq\|u_{0}\|_{L^{2}_{x}}+\|u\|^{\frac{3}{1-\epsilon_{0}}}_{L_{T}^{\frac{3}{1-\epsilon_{0}}}L^{\infty}_{x}}\|u\|_{L^{\infty}_{T}L^{2}_{x}}.$$
As the solution $u$ is constructed such that its $L_{T}^{\frac{3}{1-\epsilon_{0}}}L^{\infty}_{x}$ norm is sufficiently small, the above inequalities yield that $u\in C([0,T];H_{0}^{1})$.

\section{Scattering for the defocusing equation (Proof of Theorem \ref{mainresult})}
In this section, we will show that for the defocusing case with initial data in $H_{0}^{1}$ and for domains $\Omega$ exterior to star-shaped obstacles as well as 
for a class of almost star-shaped obstacles (see section \ref{Geometry}), the solution to the nonlinear equation scatters in $H_{0}^{1}$. To prove that is suffices to show that given any interval $I$ of time where the solution exists the $L_{I}^{\frac{3}{1-\epsilon_{0}}}L^{\infty}_{x}$ norm is controlled 
by a universal constant that is independent $I$. To achieve this we will use the conservation laws of the mass and energy \eqref{energyandmass}, 
as well as additional space-time control of the solution.

\subsection{The case of star-shaped obstacles}\label{star-shaped case}
For star-shaped obstacles, in addition to the conservation laws of the mass and energy, we will use the fact that 
the $L^{4}_{t}L^{8}_{x}$ norm is controlled,
which is a consequence of the following result by Planchon and Vega \cite{PV2D}:
\begin{proposition}(Planchon-Vega, \cite{PV2D})\label{L4L8}
 Let $\Omega$ be $\mathbb{R}^{2}\setminus V$, where $V$ is a star-shaped and bounded domain. Then $u$ the solution of
\begin{equation*}
\left\{
\begin{aligned}
&i\partial_{t}u+\Delta u=|u|^{p-1}u\;\;p\geq 1\\
&u|_{\mathbb{R}\times\partial\Omega}=0
\end{aligned}
\right.
\end{equation*}
satisfies
$$\|D^{1/2}(|u|^{2})\|_{L^{2}_{t}L^{2}_{x}}\lesssim M^{3/4}E^{1/4},$$
where $u$ is extended by zero for $x\not\in\Omega$ to make sense of
the half-derivative operator.
\end{proposition}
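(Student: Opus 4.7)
The plan is the bilinear virial / interaction Morawetz strategy, lifted from $\mathbb{R}^2$ to $\mathbb{R}^4$ by tensorization in order to circumvent the wrong sign of the two-dimensional bilaplacian. I would set
\begin{equation*}
v(t,x,y) = u(t,x)\,u(t,y)\quad\text{on }\Omega\times\Omega\subset\mathbb{R}^4,
\end{equation*}
so that $v$ satisfies
\begin{equation*}
i\partial_t v + \Delta_{(x,y)} v = \bigl(|u(x)|^{p-1}+|u(y)|^{p-1}\bigr)\,v,
\end{equation*}
with Dirichlet condition on $\partial(\Omega\times\Omega)$, and one has $\|v\|_{L^2}^2 = M(u)^2$ together with $\|\nabla_{(x,y)} v\|_{L^2}^2 \lesssim M(u)E(u)$ in the defocusing case. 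The point is that the ambient dimension is now four, where convex radial weights do satisfy $-\Delta^2 a \geq 0$.

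I would then apply the virial/Morawetz identity to $v$ with a weight $a(x,y)$ on $\mathbb{R}^4$ chosen so that $\nabla^2_{(x,y)} a \succeq 0$ and $-\Delta^2_{(x,y)} a \geq 0$ (for instance the radial $a(x,y) = \sqrt{|x|^2+|y|^2}$, for which $-\Delta^2 a = 3/a^3$). Integrating the resulting identity in time yields, schematically,
\begin{equation*}
2\!\int_0^T\!\!\int_{\Omega^2} (\nabla^2 a)(\nabla \bar v, \nabla v) + \tfrac12\!\int_0^T\!\!\int_{\Omega^2} (-\Delta^2 a)\,|v|^2 + (\text{NL}) - (\text{bdy}) = \Bigl[\,{\rm Im}\!\int \bar v\,\nabla a \cdot \nabla v\Bigr]_{t=0}^{T},
\end{equation*}
whose right-hand side is bounded via Cauchy--Schwarz by $\|v\|_{L^2}\|\nabla v\|_{L^2} \lesssim M(u)^{3/2}E(u)^{1/2}$, which is exactly the square of the target bound.

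The defocusing nonlinearity contributes with a favorable sign and may be dropped. The boundary term lives on $(\partial\Omega\times\overline{\Omega})\cup(\overline{\Omega}\times\partial\Omega)$ and, by the Dirichlet condition, reduces to an integral of $|\partial_\nu v|^2\,(\nabla a \cdot \nu)$. For the weight $a = \sqrt{|x|^2+|y|^2}$, star-shapedness of $V$ with respect to the origin (so that $x\cdot n_x \geq 0$ on $\partial V$) gives $\nabla a(x,y)\cdot(n_x,0) = (x\cdot n_x)/a \geq 0$ on $\partial\Omega\times\Omega$, and symmetrically on the other face, so the boundary contribution drops with the correct sign. What remains on the left is a nonnegative space-time integral of $|v|^2 = |u|^2(x)\,|u|^2(y)$ weighted by a radial kernel on $\mathbb{R}^4$.

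The last and technically most demanding step is to convert this weighted integral back to the $\|D^{1/2}(|u|^2)\|_{L^2_tL^2_x}^2$ norm on $\mathbb{R}^2$. Writing $f = |u|^2$ (extended by zero outside $\Omega$ so that the half-derivative on $\mathbb{R}^2$ is well defined), the integral becomes a bilinear form in $f$ against a kernel on $\mathbb{R}^4$; one then uses Plancherel together with the tensor factorization $\widehat{f\otimes f}(\xi,\eta) = \hat f(\xi)\hat f(\eta)$ to identify (possibly after averaging over translates of the weight, or otherwise symmetrizing) the resulting expression with a constant multiple of $\int |\xi|\,|\hat f(\xi)|^2\,d\xi = \|D^{1/2}(|u|^2)\|_{L^2(\mathbb{R}^2)}^2$. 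Combined with the mass-energy bound above this yields the claim. The principal obstacle is precisely this identification: the weight $a$ must simultaneously satisfy three sign conditions (interior convexity, interior bilaplacian sign, and star-shaped boundary sign) while producing on the Fourier side precisely the $\dot H^{1/2}$ kernel, which is the real content of \cite{PV2D}.
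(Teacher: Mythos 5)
The paper itself does not prove this proposition; it cites \cite{PV2D} and only proves the almost-star-shaped analogue, Proposition \ref{D-1/2}, via Propositions \ref{boundarycontrol1} and \ref{boundarycontrol2}. Measured against that blueprint, your attempt has a genuine gap at exactly the step you flag.

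Your choice of weight $a(x,y)=\sqrt{|x|^{2}+|y|^{2}}$ is indeed the right one for the \emph{boundary}: it gives $\partial_{n}a=(x\cdot n_{x})/a\geq 0$ on $\partial\Omega\times\Omega$ under star-shapedness, and your computation $-\Delta^{2}a=3/a^{3}>0$ in $\mathbb{R}^{4}$ is correct. But what this weight produces as the positive space-time term is
$\int\!\!\int |u(x)|^{2}|u(y)|^{2}\,(|x|^{2}+|y|^{2})^{-3/2}\,dx\,dy\,dt$, a \emph{centered} weighted-$L^{2}$ (local smoothing) estimate, which is not the translation-invariant quantity $\|D^{1/2}(|u|^{2})\|^{2}_{L^{2}_{t,x}}\sim\int\!\!\int\frac{\bigl(|u(x)|^{2}-|u(y)|^{2}\bigr)^{2}}{|x-y|^{3}}\,dx\,dy\,dt$. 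The two are not comparable: if $|u|^{2}$ is a fixed bump translated to $x_{0}$ with $|x_{0}|\to\infty$, the centered integral tends to zero while the $\dot H^{1/2}$ norm does not. Your proposed remedy --- averaging over translates of $a$ --- is precisely what is unavailable here, because the boundary sign $x\cdot n_{x}\geq 0$ is tied to the weight being centered at the star-shape center; a translated weight would require $(x-x_{0})\cdot n_{x}\geq 0$ on $\partial V$, which fails.

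What is actually required, and what one reads off from the paper's proof of Proposition \ref{D-1/2}, is a two-stage argument. The centered radial weight serves only to prove a \emph{preliminary} boundary/local-smoothing estimate. The main estimate then uses a translation-invariant weight built from $|x\pm y|$, whose Fourier content is the $\dot H^{1/2}$ kernel; note, however, that for such a weight $-\Delta^{2}_{(x,y)}|x-y|=-4/|x-y|^{3}<0$ in $\mathbb{R}^{4}$, so the positive contribution cannot be extracted by simply dropping the Hessian as you do --- one needs the \cite{CGT09}/\cite{PV09}-type identity combining Hessian and bilaplacian. The boundary term generated by $|x\pm y|$ is no longer signed; one expands $\nabla_{x}|x\pm y|\cdot n_{x}$ around the radial weight, uses boundedness of $x$ on $\partial V$ to isolate a main term $(x\cdot n_{x})(\cdot)\geq 0$, and bounds the remainder (of size $O((|x|^{2}+|y|^{2})^{-1/2})$) by the preliminary estimate --- exactly the cancellation used in Propositions \ref{boundarycontrol2} and \ref{D-1/2}. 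With a single radial weight, your argument cannot reach the $\dot H^{1/2}$ norm, so the proof as written does not establish the claim.
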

\begin{rem}
 Remark that this result is also true for the linear equation, and it plays the key role in proving the $L^{p-1}_{t}L^{\infty}_{x}$ 
(with $p-1\geq 4$) Strichartz estimate for star-shaped obstacles in their paper. This is what restricted the range of $p$ in \cite{PV2D}, whereas the result of 
Blair, Smith, and Sogge (Theorem \ref{BSStheorem}) allows us to get that estimate with a $p>4$. 
\end{rem}
This proposition combined with a Sobolev embedding yields that 
$$\|u\|_{L^{4}_{I}L^{8}_{x}}\lesssim M^{3/8}E^{1/8}.$$

Hence we now know that the solution $u$ to the defocusing equation exterior to star-shaped obstacles is such that
$$u\in L^{4}_{I}L^{8}_{x}\cap L^{\infty}_{I}\dot{H}^{1}$$

But, using the fact that $L^{8}$ is continuously included in $\dot{B}^{0,8}_{8}$ and $\dot{H}^{1}=\dot{B}^{1,2}_{2}$, as well as 
the following inequalities for Besov spaces:
$$q_{1}\leq q_{2}\Rightarrow \|u\|_{\dot{B}^{s,q_{2}}_{p}}\leq\|u\|_{\dot{B}^{s,q_{1}}_{p}}$$

$$p_{1}\leq p_{2}\Rightarrow \|u\|_{\dot{B}^{s-n(\frac{1}{p_{1}}-\frac{1}{p_{2}}),q}_{p_{2}}}\lesssim\|u\|_{\dot{B}^{s,q}_{p_{1}}}$$
we get the following continuous embeddings:
$$L^{8}_{x}\subset\dot{B}^{-1/4,\infty}_{\infty}\;\mbox{and}\;\dot{H}^{1}\subset\dot{B}^{0,\infty}_{\infty}$$

So, the solution $u$ is such that 
$$u\in L^{4}_{I}(\dot{B}^{-1/4,\infty}_{\infty})\cap L^{\infty}_{I}(\dot{B}^{0,\infty}_{\infty})$$

thus using the well known interpolation inequalities for Lebesgue and Besov spaces, we get that

\begin{equation*}
u\in L^{q}_{I}(\dot{B}^{\gamma,\infty}_{\infty}) 
\end{equation*}
with
$$\frac{1}{q}=\frac{\alpha}{4}+\frac{1-\alpha}{\infty}=\frac{\alpha}{4}$$
and
$$\gamma=\frac{-\alpha}{4}+0\times(1-\alpha)=\frac{-\alpha}{4}$$
for any $\alpha\in]0,1[$. We conveniently choose $\alpha=\frac{8}{9}(1-\epsilon_{0})$ (based on the scaling of the space $L^{\frac{3}{1-\epsilon_{0}}}_{T}L^{\infty}_{x}$), 
and get that $u\in L^{\frac{9}{2(1-\epsilon_{0})}}_{I}\left(\dot{B}^{-\frac{2}{9}(1-\epsilon_{0}),\infty}_{\infty}\right)$, and
\begin{align}\label{secondbesov}
\|u\|_{L^{\frac{9}{2(1-\epsilon_{0})}}_{I}\dot{B}^{-\frac{2}{9}(1-\epsilon_{0}),\infty}_{\infty}}&\lesssim\|u\|^{\alpha}_{L^{4}_{I}(\dot{B}^{-1/4,\infty}_{\infty})}\|u\|^{1-\alpha}_{L^{\infty}_{I}(\dot{B}^{0,\infty}_{\infty})}\notag\\
&\lesssim\|u\|^{\alpha}_{ L^{4}_{I}L^{8}_{x}}\|u\|^{1-\alpha}_{L^{\infty}_{I}\dot{H}^{1}}\leq C(M,E)
\end{align}

Now, given any interval $I$ of time where the solution exists, and given any $\eta>0$ there is a finite number of disjoint intervals $I_{1},\cdots I_{N}$ 
such that $\displaystyle\bigcup_{j=1}^{N}I_{j}=I$ with $N=N(\eta)$ and 
$$\|u\|_{L^{\frac{9}{2(1-\epsilon_{0})}}_{I_{j}}\dot{B}^{-\frac{2}{9}(1-\epsilon_{0}),\infty}_{\infty}}=\eta,\;\;j<N(\eta),$$
$$\|u\|_{L^{\frac{9}{2(1-\epsilon_{0})}}_{I_{j}}\dot{B}^{-\frac{2}{9}(1-\epsilon_{0}),\infty}_{\infty}}\leq\eta,\;\;j=N(\eta).$$

Hence, due to \eqref{secondbesov}, $$N(\eta)\lesssim C(M,E)\eta^{-1}.$$
Now, we fix an $\epsilon$ (to be chosen later) such that $0<\epsilon<\epsilon_{0}$ and we introduce the following lemma:
\begin{lemma}
Let $\Omega=\mathbb{R}^{2}\setminus V$, where $V$ is a non-trapping obstacle with smooth boundary, and $\Delta$ is the Dirichlet Laplacian. 
Then for $e^{it\Delta}f$ solution to the linear Schr\"{o}dinger equation with initial data $f$, we have
\begin{equation}\label{anotherstrichartz}
\|e^{it\Delta}f\|_{L^{\frac{3}{1-\epsilon}}_{t}\dot{B}_{\infty}^{\frac{2(\epsilon_{0}-\epsilon)}{3},1}}\lesssim\|f\|_{\dot{B}^{s_{c},1}_{2}}
\end{equation}
\end{lemma}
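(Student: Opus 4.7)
The proof will follow the same dyadic scheme used in Proposition \ref{Lp-1Linfty}, with a carefully chosen combination of the Blair-Smith-Sogge estimate, Bernstein's inequality and Minkowski's inequality. The plan is to show that the specific regularity index $\frac{2(\epsilon_0-\epsilon)}{3}$ appearing on the right-hand space is exactly what is needed so that, after summing over dyadic blocks, the total power of $2^j$ reconstructs $s_c = \frac{1+2\epsilon_0}{3}$.

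First, I would start from the dyadic version \eqref{BSSdyadic} of Blair-Smith-Sogge applied with the parameter $\epsilon$ (not $\epsilon_0$):
\begin{equation*}
\|\Delta_{j}(e^{it\Delta}f)\|_{L^{\frac{3}{1-\epsilon}}_{t}L^{\frac{2}{\epsilon}}_{x}}\lesssim 2^{j\frac{1-\epsilon}{3}}\|\Delta_{j}f\|_{L^{2}}.
\end{equation*}
Then, using Bernstein's inequality (valid because $\Delta_j$ is spectrally localized at frequency $\sim 2^j$) to pass from $L^{2/\epsilon}_x$ to $L^\infty_x$ costs a factor $2^{j\epsilon}$, yielding
\begin{equation*}
\|\Delta_{j}(e^{it\Delta}f)\|_{L^{\frac{3}{1-\epsilon}}_{t}L^{\infty}_{x}}\lesssim 2^{j\frac{1+2\epsilon}{3}}\|\Delta_{j}f\|_{L^{2}}.
\end{equation*}

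Next, since the time exponent $\frac{3}{1-\epsilon}\geq 1$, Minkowski's inequality allows me to pull the $L^{3/(1-\epsilon)}_t$ norm inside the $\ell^1$ summation defining the $\dot B^{\cdot,1}_\infty$ norm. This gives
\begin{equation*}
\|e^{it\Delta}f\|_{L^{\frac{3}{1-\epsilon}}_{t}\dot{B}_{\infty}^{\frac{2(\epsilon_{0}-\epsilon)}{3},1}}\leq \sum_j 2^{j\frac{2(\epsilon_0-\epsilon)}{3}}\|\Delta_{j}(e^{it\Delta}f)\|_{L^{\frac{3}{1-\epsilon}}_{t}L^{\infty}_{x}}.
\end{equation*}
Plugging in the previous bound, the total $j$-weight becomes
\begin{equation*}
\frac{2(\epsilon_0-\epsilon)}{3}+\frac{1+2\epsilon}{3}=\frac{1+2\epsilon_0}{3}=s_c,
\end{equation*}
so the $\epsilon$-dependence cancels and the sum is exactly $\sum_j 2^{js_c}\|\Delta_j f\|_{L^2}=\|f\|_{\dot B^{s_c,1}_2}$, as required.

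I expect no real obstacle here: the choice of Besov regularity on the left has been tailored precisely to absorb the Bernstein loss and restore the critical scaling. The only point to verify carefully is the condition on $\epsilon$: one needs $0<\epsilon<1$ to invoke \eqref{B-S-S}, and the problem fixes $0<\epsilon<\epsilon_0$ so that $\frac{2(\epsilon_0-\epsilon)}{3}>0$, making the left-hand Besov space a genuine regularity gain (which will matter when this lemma is used in the subsequent interpolation argument with \eqref{secondbesov}).
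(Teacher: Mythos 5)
Your proof is correct and follows essentially the same dyadic scheme as the paper: apply the Blair--Smith--Sogge block estimate at parameter $\epsilon$, pay $2^{j\epsilon}$ via Bernstein to reach $L^\infty_x$, and observe that the Besov weight $\tfrac{2(\epsilon_0-\epsilon)}{3}$ is precisely what makes the total dyadic exponent collapse to $s_c$ upon summation. The only cosmetic difference is that you explicitly invoke Minkowski to pull the $L^{3/(1-\epsilon)}_t$ norm inside the $\ell^1$ sum, a step the paper leaves implicit.
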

\begin{proof}
To prove this we will use again the Blair-Smith-Sogge estimate on a dyadic block $\Delta_{j}f$:
\begin{equation*}
 \|\Delta_{j}(e^{it\Delta}f)\|_{L^{\frac{3}{1-\epsilon}}_{t}L^{\frac{2}{\epsilon}}_{x}}\lesssim 2^{j\frac{1-\epsilon}{3}}\|\Delta_{j}f\|_{L^{2}}
\end{equation*}
thus
$$2^{\frac{2\epsilon_{0}+\epsilon}{3}j}\|\Delta_{j}(e^{it\Delta}f)\|_{L^{\frac{3}{1-\epsilon}}_{t}L^{\frac{2}{\epsilon}}_{x}}\lesssim 2^{j\frac{1+2\epsilon_{0}}{3}}\|\Delta_{j}f\|_{L^{2}}$$
But by Bernstein we have,
$$\|\Delta_{j}(e^{it\Delta}f)\|_{L^{\infty}_{x}}\lesssim2^{j\epsilon}\|\Delta_{j}(e^{it\Delta}f)\|_{L^{\frac{2}{\epsilon}}_{x}}$$
hence
$$2^{\frac{2(\epsilon_{0}-\epsilon)}{3}j}\|\Delta_{j}(e^{it\Delta}f)\|_{L_{t}^{\frac{3}{1-\epsilon}}L^{\infty}_{x}}
\lesssim 2^{\frac{2\epsilon_{0}+\epsilon}{3}j}\|\Delta_{j}(e^{it\Delta}f)\|_{L_{t}^{\frac{3}{1-\epsilon}}L^{\frac{2}{\epsilon}}_{x}}
\lesssim 2^{\frac{1+2\epsilon_{0}}{3}j}\|\Delta_{j}f\|_{L^{2}_{x}}$$
and thus get
\begin{equation*}
\|e^{it\Delta}f\|_{L^{\frac{3}{1-\epsilon}}_{t}\dot{B}_{\infty}^{\frac{2(\epsilon_{0}-\epsilon)}{3},1}}\leq\sum_{j}2^{\frac{2(\epsilon_{0}-\epsilon)}{3}j}\|\Delta_{j}(e^{it\Delta}f)\|_{L_{t}^{\frac{3}{1-\epsilon}}L^{\infty}_{x}}
\lesssim\|f\|_{\dot{B}^{s_{c},1}_{2}}
\end{equation*}
\end{proof}
Using the Duhamel formula \eqref{duhamel} and the above estimate \eqref{anotherstrichartz} shows that the solution we constructed locally is also in 
$L^{\frac{3}{1-\epsilon}}_{I}\dot{B}_{\infty}^{\frac{2(\epsilon_{0}-\epsilon)}{3},1}$, and in particular we have by Duhamel on $J_{j}(t)=[t_{j},t]\subset I_{j}=[t_{j},t_{j+1})$:

 \begin{equation}\label{thesecond}
\|u\|_{L^{\frac{3}{1-\epsilon}}_{J_{j}(t)}\dot{B}^{\frac{2(\epsilon_{0}-\epsilon)}{3},1}_{\infty}}\lesssim\|u(t_{j})\|_{\dot{B}^{s_{c},1}_{2}}
+\|u\|^{\frac{3}{1-\epsilon_{0}}}_{L^{\frac{3}{1-\epsilon_{0}}}_{J_{j}(t)}L^{\infty}_{x}}\|u\|_{L^{\infty}_{J_{j}(t)}\dot{B}^{s_{c},1}_{2}}
\end{equation}

On the other hand, we have the following interpolation inequality
$$\|u\|_{\dot{B}^{0,1}_{\infty}}\lesssim\|u\|^{\beta}_{\dot{B}^{-\frac{2}{9}(1-\epsilon_{0}),\infty}_{\infty}}\|u\|^{1-\beta}_{\dot{B}^{\frac{2}{3}(\epsilon_{0}-\epsilon),\infty}_{\infty}}$$
with $0=-\frac{2}{9}\beta(1-\epsilon_{0})+\frac{2}{3}(1-\beta)(\epsilon_{0}-\epsilon)$. For simplicity, we choose 
$\epsilon=\frac{2\epsilon_{0}}{3}$, and thus $\beta=\epsilon_{0}$. Using the fact that $\dot{B}^{0,1}_{\infty}$ is continuously included in 
$L^{\infty}$ and that $$\|u\|_{\dot{B}^{\frac{2}{9}\epsilon_{0},\infty}_{\infty}}\leq\|u\|_{\dot{B}^{\frac{2}{9}\epsilon_{0},1}_{\infty}}$$
we get that
$$\|u\|_{L^{\infty}_{x}}\lesssim\|u\|^{\epsilon_{0}}_{\dot{B}^{-\frac{2}{9}(1-\epsilon_{0}),\infty}_{\infty}}\|u\|^{1-\epsilon_{0}}_{\dot{B}^{\frac{2}{9}\epsilon_{0},1}_{\infty}}$$
hence,
\begin{equation}\label{thefirst}
\|u\|_{L^{\frac{3}{1-\epsilon_{0}}}_{J_{j}(t)}L^{\infty}_{x}}\lesssim\|u\|^{\epsilon_{0}}_{L^{\frac{9}{2(1-\epsilon_{0})}}_{J_{j}(t)}\dot{B}^{-\frac{2}{9}(1-\epsilon_{0}),\infty}_{\infty}}\|u\|^{1-\epsilon_{0}}_{L^{\frac{3}{1-\epsilon}}_{J_{j}(t)}\dot{B}^{\frac{2}{9}\epsilon_{0},1}_{\infty}}
\end{equation}
Note that $n(t)=\|u\|_{L^{\frac{3}{1-\epsilon_{0}}}_{J_{j}(t)}L^{\infty}_{x}}$ is a continuous function and $n(t_{j})=0$.
Now, since $$\|u\|_{\dot{B}^{s_{c},1}_{2}}\lesssim\|u\|^{s_{c}}_{\dot{H}^{1}}\|u\|_{L^{2}}^{1-s_{c}}\leq K$$ where $K$ is a constant that depends 
on the conserved mass and energy, \eqref{thesecond} and \eqref{thefirst} yield
\begin{align*}
\|u\|_{L^{\frac{3}{1-\epsilon_{0}}}_{J_{j}(t)}L^{\infty}_{x}}&\lesssim\|u\|^{\epsilon_{0}}_{L^{\frac{9}{2(1-\epsilon_{0})}}_{J_{j}(t)}\dot{B}^{-\frac{2}{9}(1-\epsilon_{0}),\infty}_{\infty}}K^{1-\epsilon_{0}}(1
+\|u\|^{\frac{3}{1-\epsilon_{0}}}_{L^{\frac{3}{1-\epsilon_{0}}}_{J_{j}(t)}L^{\infty}_{x}})^{1-\epsilon_{0}}\\
&\lesssim \eta^{\epsilon_{0}}K^{1-\epsilon_{0}}(1+\|u\|^{3}_{L^{\frac{3}{1-\epsilon_{0}}}_{J_{j}(t)}L^{\infty}_{x}})
\end{align*}
choosing $\eta$ such that $\eta^{\epsilon_{0}}K^{1-\epsilon_{0}}$ is small enough, we conclude that 
 $\|u\|_{L^{\frac{3}{1-\epsilon_{0}}}_{I_{j}}L^{\infty}_{x}}$ remains bounded by a universal constant $C_{1}$ independent of the time interval of existence $I$.  
Therefore, $$\|u\|_{L^{\frac{3}{1-\epsilon_{0}}}_{I}L^{\infty}_{x}}\leq C_{1}N\lesssim C(M,E).$$ 
Hence, our global solution satisfies
 $$\|u\|_{L^{\frac{3}{1-\epsilon_{0}}}_{\mathbb{R}}L^{\infty}_{x}}\leq C(M,E).$$
Finally, defining $u_{+}\in H_{0}^{1}$ as 
$$u_{+}=u_{0}+\int_{0}^{\infty}e^{i\tau\Delta}|u|^{p-1}u(\tau)d\tau$$
and similarly for $u_{-}$, we get the scattering 
$$\|u(\cdot,t)-e^{it\Delta}u_{\pm}\|=o(1)\;\;\;t\rightarrow\pm\infty.$$

\subsection{The case of almost star-shaped obstacles}

In this section, we will prove the scattering for the defocusing equation for almost star-shaped obstacles $V$ satisfying the following geometric 
condition:
 Given an $\epsilon$ such that $0<\epsilon<1$,
\begin{equation}\label{geometric-condition}
(x_{1},\epsilon x_{2})\cdot n_{x} > 0\;\;\mbox{for}\;\;x=(x_{1},x_{2})\in\partial V 
\end{equation}
where $n_{x}$ is the exterior unit normal to $\partial V$.\vspace{2mm}\newline
In this case, we lost the $L^{4}_{t}L^{8}_{x}$ control which was obtained under 
the star-shaped assumption. However, we will establish a similar control in some $L^{a}_{t}L^{b}_{x}$ norm that will play the same role in proving 
the scattering.  

\subsubsection{Geometry of the obstacle}\label{Geometry}
In 1969, Ivrii introduced the notion of almost star-shaped obstacles in the setting of the linear wave equation. He proved in \cite{Ivrii} 
local energy decay results for domains exterior to such obstacles in odd dimensions $n>1$. An almost star-shaped obstacle $V$
($\Omega=\mathbb{R}^{n}\setminus V$) is defined as follows:
\begin{definition}\label{Def-Ivrii}
A bounded open region $V$ with a boundary in class $C^{1}$ is said to be almost star-shaped if there exists a D bounded open neighborhood 
of $\overline{V}$, a real-valued function $\phi\in C^{2}(\overline{D}\cap\Omega)$ and a constant $c_{0}$ such that:
\begin{itemize}
 \item $\phi(x)<c_{0}$, $x\in D\cap\Omega$, $\phi(x)=c_{0},$ $x\in\partial D$.
\item $|\nabla \phi(x)|\geq const>0$, $x\in \overline{D}\cap\Omega$.
\item The level surfaces $\phi(x)=c$ are strongly convex; the radius of curvature in all directions at all points of $\Omega\cap\overline{D}$ 
is uniformly bounded from above.
\item At points of intersection of the level surfaces with $\partial V$ their outer normals and the outer normal to $\partial V$ form an 
angle which is not greater than a right angle.  
\end{itemize}
\end{definition}
These obstacles are a natural generalization of the star-shaped obstacles. If the level surfaces are spheres with a common center, then $V$
is star-shaped and conversely.
According to the above definition, an almost star-shaped obstacle with ellipses  as level surfaces satisfies the 
geometric condition \eqref{geometric-condition}, where the strict inequality corresponds to an angle
strictly less than a right angle in the 4th condition of Definition \ref{Def-Ivrii}.
More explicitly, the function $\phi$ is given by 
$\phi(x)=\sqrt{x_{1}^{2}+\epsilon x_{2}^{2}}$ and this corresponds to what is called the gauge function of the convex body delimited by 
the ellipse given by the equation $x_{1}^{2}+\epsilon x_{2}^{2}=c^{2}$.

 We also remark that the case of almost star-shaped obstacles corresponds to the works of Strauss \cite{S75} and Morawetz \cite{M75} that followed 
in 1975 (independently of Ivrii's work which was unknown to them at that time) on local energy decay for the linear wave equation. Moreover, in the 
same setting and around the same time in the 70's, 
another generalization to the star-shaped case was introduced which is the illuminating geometry. Decay results were obtained 
for the so-called illuminated from interior and illuminated from exterior obstacles (see \cite{BK74}, \cite{BK76}, \cite{L87}). 
Furthermore, scattering results were recently obtained for the 3D critical nonlinear wave equation in domains exterior to such obstacles (\cite{Farah-wave}).
However, we opted to work here with almost star-shaped obstacles and use the gauge function of the ellipse
rather than the illuminating geometry (that would impose using the distance to the ellipse) mainly because the computation is much
easier with the gauge function. The dog bone like obstacle in Figure \ref{fig:os de chien} below is an almost star-shaped obstacle (and also illuminated from interior).

\begin{figure}[htbp]
\centering
 \includegraphics[width=9cm]{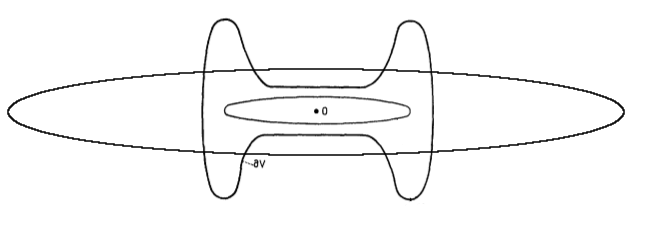}
\caption{dog bone}
\label{fig:os de chien}
\end{figure}

\subsubsection{Space-time control of the solution}
In this part, we will prove that the norm of $u$ in some $L^{a}_{t}L^{b}_{x}$ is controlled by a constant depending on the mass and the energy. This 
will be a consequence of the following proposition which is an alternative to Proposition \ref{L4L8} that is restricted to the star-shaped case:
\begin{proposition}\label{D-1/2}
Let $\Omega$ be $\mathbb{R}^{2}\setminus V$, with $V$ is an obstacle satisfying condition \eqref{geometric-condition}. 
Assume $u$ is a solution to
\begin{align*}
&i\partial_{t}u+\Delta u=\alpha|u|^{p-1}u\;\;\;\;\mbox{in}\;\Omega,\;\;p>1\\
&u|_{\mathbb{R}\times\partial\Omega}=0,
\end{align*}
with $\alpha=\{0,1\}$. Then we have 
\begin{equation}
\label{crucialR4}
\|D^{-1/2}(|v|^{2})\|_{L^{2}_{t,X}}\lesssim M^{7/4}E^{1/4}
\end{equation}
where $v(X)=v(x,y)=u(x)u(y)$ is the solution to 
\begin{align*}
&i\partial_{t} v+\Delta v=\alpha (|u|^{p-1}(x)+|u|^{p-1}(y))v\;\;\;\;\mbox{in}\;\Omega\times\Omega\\
&v|_{\partial(\Omega\times\Omega)}=0,
\end{align*}
and where we extend $v(\cdot)$ by zero for $x\not\in \Omega$ or
$y\not\in\Omega$, so that \eqref{crucialR4} makes sense for $x\in\mathbb{R}^4$.
\end{proposition}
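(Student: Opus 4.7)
The plan is to apply the interaction Morawetz / tensor product scheme of Proposition~\ref{L4L8}, replacing the radial weight $|X|$ by a 4D ellipsoidal gauge tailored to the geometric condition \eqref{geometric-condition}. Since $u$ solves the NLS on $\Omega$, the tensor $v(x,y)=u(x)u(y)$ on the 4-dimensional domain $\Omega\times\Omega$ satisfies the equation stated in the proposition with Dirichlet conditions on $\partial(\Omega\times\Omega)$; a direct computation yields $\|v\|_{L^2(\mathbb{R}^4)}=M$ and $\|\nabla_X v\|_{L^2}^2\lesssim ME$. First I would introduce the weight
$$a(X)=\sqrt{x_1^2+\epsilon x_2^2+y_1^2+\epsilon y_2^2},\qquad X=(x,y)\in\mathbb{R}^4,$$
which is convex (its Hessian is positive semi-definite by Cauchy--Schwarz in the weighted inner product induced by the coefficients $(1,\epsilon,1,\epsilon)$), and whose gradient $\nabla_X a(X)=(x_1,\epsilon x_2,y_1,\epsilon y_2)/a(X)$ is chosen precisely to produce the expression $(x_1,\epsilon x_2)\cdot n_x$ on $\partial V\times\Omega$. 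Without loss of generality $0\in V$, so that $a$ is smooth on $\Omega\times\Omega$.

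Next, I would form the Morawetz action
$$Q(t)=2\operatorname{Im}\int_{\Omega\times\Omega}\bar v(t,X)\,\nabla_X v(t,X)\cdot\nabla_X a(X)\,dX,$$
differentiate in $t$ using the equation for $v$, and integrate by parts to obtain the standard identity
$$\partial_t Q=2\int(\nabla^2 a)(\nabla\bar v,\nabla v)\,dX-\tfrac12\int(\Delta_X^2 a)\,|v|^2\,dX+\alpha\,\mathcal{N}(v)+\mathcal{B}(v),$$
where $\mathcal{N}(v)$ collects the potential-type contribution (favorable for the defocusing sign, absent for $\alpha=0$) and $\mathcal{B}(v)$ collects the boundary terms. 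The central new point is the analysis of $\mathcal{B}(v)$: on $\partial V\times\Omega$ the outward normal is $(n_x,0)$, the Dirichlet condition $v=0$ forces $\nabla_X v=(\partial_{n_x}v)(n_x,0)$, and the boundary integrand collapses to $|\partial_{n_x}v|^2\,(x_1,\epsilon x_2)\cdot n_x/a(X)$, which is non-negative \emph{exactly} by the geometric condition \eqref{geometric-condition}. A symmetric statement holds on $\Omega\times\partial V$. Therefore $\mathcal{B}(v)\ge 0$ and can be dropped from the lower bound.

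Discarding $\mathcal{B}(v)$, using the convexity of $a$ to drop the Hessian term as well, and verifying (by explicit computation in the weighted Cartesian coordinates associated to $a$) that $-\tfrac12\Delta_X^2 a$ behaves, up to bounded corrections, like a positive multiple $c(\epsilon)/a(X)^3$ that dominates when paired against $|v|^2$, the integrated identity reduces to
$$\int_I\int_{\mathbb{R}^4}\frac{|v(t,X)|^2}{a(X)^3}\,dX\,dt\ \lesssim\ \sup_t|Q(t)|\ \lesssim\ \|v\|_{L^\infty_t L^2_X}\,\|\nabla_X v\|_{L^\infty_t L^2_X}\ \lesssim\ M^{3/2}E^{1/2}.$$
Since the Fourier symbol of $1/|X|^3$ on $\mathbb{R}^4$ is a constant multiple of $1/|\xi|$, Plancherel together with the equivalence of $a(X)$ and $|X|$ up to $\epsilon$-dependent constants identifies the left-hand side with $\|D^{-1/2}(|v|^2)\|_{L^2_{t,X}}^2$, up to an interpolation step on the density $|v|^2$ that converts the pointwise mass bound $\|v\|_{L^2}^2=M^2$ into the additional $M^{1/2}$ factor needed to upgrade the exponent from $M^{3/2}E^{1/2}$ (squared) to $M^{7/2}E^{1/2}$; taking square roots yields the announced $M^{7/4}E^{1/4}$.

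The main obstacle is twofold. The boundary analysis depends on the ellipsoidal weight being chosen exactly so that the Dirichlet trace of the Morawetz integrand collapses to the scalar quantity $(x_1,\epsilon x_2)\cdot n_x$; this is the payoff of using $a$ rather than the Euclidean $|X|$. More delicately, the sign of $\Delta_X^2 a$ is not uniformly negative (unlike the radial case $\epsilon=1$, where $\Delta^2|X|=-3/|X|^3$ pointwise in $\mathbb{R}^4$), so one has to argue that the singular negative part $-c(\epsilon)/a^3$ dominates the bounded positive corrections after integration against $|v|^2$. The Fourier identification and the bookkeeping leading to the precise exponent $M^{7/4}E^{1/4}$ are then standard but must be tracked carefully.
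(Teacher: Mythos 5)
Your plan has two fatal flaws, both of which the paper explicitly anticipates and avoids by working in $8$ dimensions rather than $4$.

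First, the weight is wrong for the target quantity. You propose $a(X)=\sqrt{x_1^2+\epsilon x_2^2+y_1^2+\epsilon y_2^2}$ on $\mathbb{R}^4$, for which the favorable bilaplacian term in the virial identity produces $\int |v(X)|^2/a(X)^3\,dX\,dt$, a Hardy-type local smoothing estimate. But by Plancherel, $\|D^{-1/2}(|v|^2)\|^2_{L^2_X}=c\iint\frac{|v(X)|^2|v(Y)|^2}{|X-Y|^3}\,dX\,dY$: you need the convolution kernel $|X-Y|^{-3}$, i.e.\ an interaction structure, not a single weight $a(X)^{-3}$. The two are not equivalent, and there is no Plancherel identity that turns $\int |v|^2/a^3$ into $\|D^{-1/2}(|v|^2)\|_{L^2}^2$. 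The paper therefore tensors again, takes $U=v(X)v(Y)$ on $\Omega^4\subset\mathbb{R}^8$, and uses the weight $\rho(X,Y)=|X-Y|+|X'+Y|+|X'-Y|+|X+Y|$ whose $8$D bilaplacian is exactly $-12\sum 1/|\cdot|^3$; this is what produces the correct quadrilinear $\iint |v(X)|^2|v(Y)|^2/|X-Y|^3$. Your exponent discrepancy (you get $M^{3/2}E^{1/2}$ from $\sup|Q|$ rather than $M^{7/2}E^{1/2}$, and then try to patch this by an ``interpolation step'' that is not an actual estimate) is precisely the signal that a second tensoring is required: with $U=u(x)u(y)u(z)u(w)$ one has $\|U\|_{L^2}\|\nabla U\|_{L^2}\lesssim M^2\cdot M^{3/2}E^{1/2}=M^{7/2}E^{1/2}$, and the exponent falls out cleanly.

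Second, even the sign of the bilaplacian for your ellipsoidal weight fails in $4$D. Remark~\ref{thebilaplacian} in the paper computes $\Delta^2 H$ for $H=\sqrt{x_1^2+\cdots+x_n^2+\epsilon(x_{n+1}^2+\cdots+x_{2n}^2)}$ in $\mathbb{R}^{2n}$ and finds the leading coefficient $A=-n(n+2)\epsilon^2-2n(n-3)\epsilon-n^2+4n-3$. For $n=2$ (your $4$D case) this is $A=1+4\epsilon-8\epsilon^2$, which is \emph{positive} for small $\epsilon$; so $\Delta^2 a$ is not of a favorable sign and the ``positive multiple $c(\epsilon)/a^3$ up to bounded corrections'' you invoke does not exist. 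The paper says this in so many words (``this will not be enough to cover all epsilons $0<\epsilon<1$ since the bilaplacian will not always have the right sign''), and remedies it by increasing the dimension ($n\geq 3$ gives $A,B<0$). That is the entire point of Propositions~\ref{boundarycontrol1} and~\ref{boundarycontrol2}: they are proved in $8$D with ellipsoidal weights $\rho_1,\rho_2$ so that the bilaplacian is negative and the boundary contribution has the right sign via \eqref{geometric-condition}, and they are then used to control the boundary terms generated by the final $\rho(X,Y)$, whose boundary terms do \emph{not} have an obviously good sign. Your outline has none of this scaffolding, and without it the boundary terms in the $|X-Y|$-type virial identity are uncontrolled.
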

This proposition means that (the extension to $\mathbb{R}^4$ of) $|v|^{2}\in L^{2}_{t}\dot{H}^{-1/2}_{X}$ and its norm is controlled by a constant depending on the mass and the energy 
of the solution $u$.\newline
From now on we will use the notation $C(M,E)$ to denote a constant
that depends on the conserved mass and energy of $u$. This constant
may vary from line to line. Moreover, all implicit constants are
allowed to depend on the geometry of the obstacle (in particular, they
may and will depend on $\epsilon$ appearing in
\eqref{geometric-condition}). Finally, we also have:
\begin{lemma} Let $v$ be again the extension by zero of our solution $v$ to
  the whole space $\mathbb{R}^4$. Then $|v|^{2}\in L^{\infty}_{t}H^{s}_{X}$, $\forall 0<s<1$ and its norm is controlled by $C(M,E)$.
\end{lemma}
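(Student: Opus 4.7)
The plan is to reduce the desired $H^s$ bound on $|v|^2$ on $\mathbb{R}^4$ to an $H^s$ bound on $|u|^2$ on $\mathbb{R}^2$, by exploiting the tensor product structure $|v|^2(t,x,y) = |u(t,x)|^2\,|u(t,y)|^2$. Since the equation is defocusing, mass and energy conservation give $\|u(t)\|_{H^1_0(\Omega)} \lesssim C(M,E)$ uniformly in $t$; the extension by zero of $u(t,\cdot)$, a standard feature of $H^1_0$, then lies in $L^\infty_t H^1(\mathbb{R}^2)$ with the same bound, and this is where all my quantitative control comes from.

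Next I would bound $|u|^2$ in $L^\infty_t H^s(\mathbb{R}^2)$ for every $0 < s < 1$. From $\nabla(|u|^2) = u\,\nabla \bar u + \bar u\,\nabla u$ and H\"older with $\tfrac{1}{q} = \tfrac{1}{p} + \tfrac{1}{2}$, one has $\|\nabla(|u|^2)\|_{L^q(\mathbb{R}^2)} \lesssim \|u\|_{L^p} \|\nabla u\|_{L^2}$. In dimension two, $H^1 \hookrightarrow L^p$ for every $p < \infty$, so $|u|^2 \in W^{1,q}(\mathbb{R}^2)$ for every $1 < q < 2$ with norm $\lesssim C(M,E)$. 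I would then invoke the scaling-critical Sobolev embedding $W^{1,q}(\mathbb{R}^2) \hookrightarrow H^s(\mathbb{R}^2)$ with $s = 2 - 2/q$, which as $q$ varies in $(1,2)$ covers the full range $0 < s < 1$.

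For the final step, I use the tensor product: $\widehat{|v|^2}(\xi,\eta) = \widehat{|u|^2}(\xi)\,\widehat{|u|^2}(\eta)$, and the elementary inequality $(1 + |\xi|^2 + |\eta|^2)^s \leq (1+|\xi|^2)^s (1+|\eta|^2)^s$ (valid for all $s \geq 0$, since $(1+a)(1+b) \geq 1+a+b$) yields
\[
\bigl\| |v|^2 \bigr\|_{H^s(\mathbb{R}^4)} \leq \bigl\| |u|^2 \bigr\|_{H^s(\mathbb{R}^2)}^2 \lesssim C(M,E),
\]
uniformly in $t$, which is the claim. There is no real obstacle: the only subtle point is that $|u|^2$ cannot in general be placed in $H^1(\mathbb{R}^2)$ when $u \in H^1(\mathbb{R}^2)$ in two dimensions (the product $u\,\nabla u$ is just out of reach of $L^2$), which is precisely why the lemma stops short of $s=1$.
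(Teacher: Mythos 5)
Your proof is correct and follows essentially the same route as the paper: bound $|u|^2$ in $H^s(\mathbb{R}^2)$ via the Leibniz rule, H\"older, and $H^1(\mathbb{R}^2)\hookrightarrow L^p$, then transfer to $\mathbb{R}^4$ through the tensor structure $\widehat{|v|^2}(\xi,\eta)=\widehat{|u|^2}(\xi)\,\widehat{|u|^2}(\eta)$. The only cosmetic difference is that you use the inhomogeneous inequality $(1+|\xi|^2+|\eta|^2)^s\leq(1+|\xi|^2)^s(1+|\eta|^2)^s$, while the paper separates the $L^2$ piece and uses the homogeneous bound $(|\xi|^2+|\zeta|^2)^s\lesssim_s|\xi|^{2s}+|\zeta|^{2s}$ together with $\|u\|_{L^4}^4$.
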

\begin{proof}
We have $u\in H^{1}$ thus, $\forall 0<s<1$, $u\in H^{s}$ and consequently (by Sobolev embedding), 
$u\in L^{m}$ for all $m<\infty$. Now, given any $2<p<\infty$, we can easily prove that $|u|^{2}\in L^{p}(\mathbb{R}^{2})$ and $|u|^{2}\in W^{1,q}(\mathbb{R}^{2})$ 
with $1/q=1/2+1/p$. Hence, by Sobolev interpolation inequality, $|u|^{2}\in H^{1-2/p}(\mathbb{R}^{2})$. So, for any $0<s<1$, we have $|u|^{2}\in H^{s}$ and 
its norm is controlled by $C(M,E)$. Now, we have
\begin{align*}
\||v|^{2}\|_{\dot{H}^{s}}^{2}&=\int_{\mathbb{R}^{4}}(|\xi|^{2}+|\zeta|^{2})^{s}|\widehat{|v|^{2}}(\xi,\zeta)|^{2}d\xi d\zeta\\
&\leq C_{s}\int_{\mathbb{R}^{4}}(|\xi|^{2s}+|\zeta|^{2s})|\widehat{|u|^{2}}(\xi)|^{2}|\widehat{|u|^{2}}(\zeta)|^{2}d\xi d\zeta\\
&\leq 2C_{s}\||u|^{2}\|^{2}_{\dot{H}^{s}(\mathbb{R}^{2})}\|u\|^{4}_{L^{4}(\mathbb{R}^{2})}\leq C(M,E)
\end{align*}
and it is easy to see that $$\||v|^{2}\|_{L^{2}(\mathbb{R}^{4})}=\|u\|_{L^{4}(\mathbb{R}^{2})}^{4}\leq C(M,E)$$
\end{proof}
Fix $0<s<1$ to be chosen later, we have 
$$\||v|^{2}\|_ {L^{\frac{1+2s}{s}}_{t}L^{2}_{X}}\lesssim\||v|^{2}\|_{L^{2}_{t}\dot{H}^{-1/2}_{X}}^{\frac{2s}{1+2s}}\||v|^{2}\|_{ L^{\infty}_{t}H^{s}_{X}}^{1-\frac{2s}{1+2s}}\leq C(M,E)$$
Consequently, we get our desired control (which now makes sense
irrespective of $x\in \mathbb{R}^2$ or $x\in \Omega$) $$\|u\|_{L^{\frac{4(1+2s)}{s}}_{t}L^{4}_{x}}\leq C(M,E).$$
Now, we are ready to continue the proof which is practically the same as in section \ref{star-shaped case}. The solution $u$ is such that
$$u\in L^{\frac{4(1+2s)}{s}}_{I}L^{4}_{x}\cap L^{\infty}_{I}\dot{H}^{1}$$
So, 
$$u\in L^{\frac{4(1+2s)}{s}}_{I}(\dot{B}^{-1/2,\infty}_{\infty})\cap L^{\infty}_{I}(\dot{B}^{0,\infty}_{\infty})$$

Using the well known interpolation inequalities for Lebesgue and Besov spaces, we get that
\begin{equation*}
\|u\|_{ L^{q}_{I}(\dot{B}^{\gamma,\infty}_{\infty})}\leq C(M,E) 
\end{equation*}
with
$q=\frac{4(1+2s)}{s\alpha}$ and $\gamma=\frac{-\alpha}{2}$ for any $\alpha\in]0,1[$. Here, the convenient choice is $\alpha=\frac{4}{3}\frac{(1-\epsilon_{0})(1+2s)}{1+3s}$ based on the scaling of the space $L^{\frac{3}{1-\epsilon_{0}}}_{T}L^{\infty}_{x}$. 
However, to assure that $0<\alpha<1$, we need to choose $s$ such that $\frac{1-4\epsilon_{0}}{1+8\epsilon_{0}}<s<1$. 
Note that when $1/4\leq\epsilon_{0}<1$ ($p\geq5$) any $0<s<1$ will do, but for $0<\epsilon_{0}<1/4$ ($4<p<5$), 
we have a restriction on the choice of $s$.
Now, as in section \ref{star-shaped case}, we decompose any given interval $I$ of time where the solution exists: 
Given any $\eta>0$ there is a finite number of disjoint intervals $I_{1},\cdots I_{N}$ 
such that $\displaystyle\bigcup_{j=1}^{N}I_{j}=I$ with $N=N(\eta)$ and 
$$\|u\|_{L^{q}_{I_{j}}\dot{B}^{\gamma,\infty}_{\infty}}=\eta,\;\;j<N(\eta),\;\mbox{and}\;\|u\|_{L^{q}_{I_{j}}\dot{B}^{\gamma,\infty}_{\infty}}\leq\eta,\;\;j=N(\eta).$$
On the other hand, recalling that $J_{j}(t)=[t_{j},t]$, \eqref{thesecond} still holds
\begin{equation*}
\|u\|_{L^{\frac{3}{1-\epsilon}}_{J_{j}(t)}\dot{B}^{\frac{2(\epsilon_{0}-\epsilon)}{3},1}_{\infty}}\lesssim\|u(t_{j})\|_{\dot{B}^{s_{c},1}_{2}}
+\|u\|^{\frac{3}{1-\epsilon_{0}}}_{L^{\frac{3}{1-\epsilon_{0}}}_{J_{j}(t)}L^{\infty}_{x}}\|u\|_{L^{\infty}_{J_{j}(t)}\dot{B}^{s_{c},1}_{2}}
\end{equation*}
We choose $\epsilon=\frac{s\epsilon_{0}}{1+3s}<\epsilon_{0}$ and we get the following inequality
$$\|u\|_{L^{\infty}}\lesssim\|u\|^{\epsilon_{0}}_{\dot{B}^{\gamma,\infty}_{\infty}}\|u\|^{1-\epsilon_{0}}_{\dot{B}^{\frac{2}{3}(\epsilon_{0}-\epsilon),1}_{\infty}}$$
hence
\begin{equation*}
\|u\|_{L^{\frac{3}{1-\epsilon_{0}}}_{J_{j}(t)}L^{\infty}_{x}}\lesssim\|u\|^{\epsilon_{0}}_{L^{q}_{J_{j}(t)}\dot{B}^{\gamma,\infty}_{\infty}}\|u\|^{1-\epsilon_{0}}_{L^{\frac{3}{1-\epsilon}}_{J_{j}(t)}\dot{B}^{\frac{2}{3}(\epsilon_{0}-\epsilon),1}_{\infty}}
\end{equation*}
and the rest follows exactly as in section \ref{star-shaped case}.

\subsubsection{Proof of Proposition \ref{D-1/2}}
In this section we will provide the proof of Proposition \ref{D-1/2} following an approach similar to one used by Planchon and Vega in \cite{PV2D} 
to prove Proposition \ref{L4L8}. First, we will state the following remark that will be useful in our computations:
\begin{rem}\label{thebilaplacian}
 If $H$ is a function in $\mathbb{R}^{2n}$ of the form
$$H(x)=\sqrt{x_{1}^{2}+\cdots+x_{n}^{2}+\epsilon(x_{n+1}^{2}+\cdots+x_{2n}^{2})}$$ 
with $0<\epsilon<1$. Then,
$$\Delta^{2}H=\frac{A}{H^{3}}+\frac{B(x_{n+1}^{2}+\cdots+x_{2n}^{2})}{H^{5}}+\frac{C(x_{n+1}^{2}+\cdots+x_{2n}^{2})^{2}}{H^{7}}$$
with 
\begin{align*}
&A=-n(n+2)\epsilon^{2}-2n(n-3)\epsilon-n^{2}+4n-3\\
&B=2\epsilon(\epsilon-1)(3(\epsilon+1)(n+2)-15)\\
&C=-15\epsilon^{2}(\epsilon-1)^{2}<0
\end{align*}
Moreover, when $n\geq 3$ then $A,B<0$ $\forall 0<\epsilon<1$, and hence $\Delta^{2}H<0$.
\end{rem}

Now, we have the following proposition:

\begin{proposition}\label{boundarycontrol1}
Let $\Omega$ be $\mathbb{R}^{2}\setminus V$, with $V$ is an obstacle satisfying condition \eqref{geometric-condition}. 
Assume $u$ is a solution to
\begin{align*}
&i\partial_{t}u+\Delta u=\alpha|u|^{p-1}u\;\;\;\;\mbox{in}\;\Omega,\;\;p>1\\
&u|_{\mathbb{R}\times\partial\Omega}=0,
\end{align*}
with $\alpha=\{0,1\}$. Then we have the following estimate
\begin{equation}\label{localsmoothing}
\int\int_{\partial\Omega\times\Omega\times\Omega\times\Omega}\frac{|\partial_{n_{\partial V}}u(x)|^{2}}{\rho_{1}(x,y,z,w)}|u(y)|^{2}|u(z)|^{2}|u(w)|^{2}d\sigma_{x}dydzdw dt\lesssim M^{7/2}E^{1/2}
\end{equation}
where $$\rho_{1}(x,y,z,w)=\sqrt{x_{1}^{2}+y_{1}^{2}+z_{1}^{2}+w_{1}^{2}+\epsilon(x_{2}^{2}+y_{2}^{2}+z_{2}^{2}+w_{2}^{2})}$$
and $M$ and $E$ are the conserved mass and energy.
\end{proposition}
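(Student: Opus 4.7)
The plan is to adapt the tensor-product Morawetz argument of Planchon-Vega \cite{PV2D}, substituting the Euclidean distance used there with the ellipsoidal weight $\rho_{1}$ from Remark \ref{thebilaplacian} taken in dimension $n=4$. First I would double the spatial variables twice, setting
\begin{equation*}
v(t,X) = u(t,x)\,u(t,y)\,u(t,z)\,u(t,w), \qquad X = (x,y,z,w) \in \Omega^{4} \subset \mathbb{R}^{8},
\end{equation*}
so that $v$ solves
\begin{equation*}
i\partial_{t} v + \Delta_{X} v = \alpha\bigl(|u(x)|^{p-1} + |u(y)|^{p-1} + |u(z)|^{p-1} + |u(w)|^{p-1}\bigr) v
\end{equation*}
on $\Omega^{4}$ with Dirichlet data on $\partial(\Omega^{4})$. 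Here $M(v) = M(u)^{4}$ and $E(v) \lesssim M(u)^{3} E(u)$, so that $M(v)^{1/2}E(v)^{1/2} \sim M^{7/2}E^{1/2}$, matching the claimed right-hand side.

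Next I would compute the time derivative of the Morawetz action $\mathcal{M}(t) = \operatorname{Im} \int_{\Omega^{4}} \bar v\,\nabla \rho_{1} \cdot \nabla v\,dX$. Integration by parts yields, schematically,
\begin{equation*}
\mathcal{M}(T) - \mathcal{M}(0) = \text{(interior bulk)} + \text{(boundary terms)},
\end{equation*}
and all interior contributions are nonnegative, hence discardable: the bilaplacian term $-\frac{1}{2}\int \Delta^{2}\rho_{1}\,|v|^{2}$ is positive since Remark \ref{thebilaplacian} with $n = 4$ and $0<\epsilon<1$ forces $A, B, C < 0$ and hence $\Delta^{2}\rho_{1} < 0$; the Hessian term $2\int D^{2}\rho_{1}(\nabla v,\nabla\bar v)$ is nonnegative because $\rho_{1}$, being the gauge of a convex ellipsoid, is a convex function of $X$; and the nonlinear commutator has the usual positive sign in the defocusing case (and vanishes for $\alpha=0$).

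What remains is the boundary contribution. The boundary of $\Omega^{4}$ splits into four symmetric faces; consider $\partial\Omega \times \Omega^{3}$, whose outer unit normal at $(x,y,z,w)$ with $x\in\partial V$ is $(n_{x},0,0,0)$. Since $v$ vanishes on $\partial(\Omega^{4})$, on this face $\nabla v$ is purely in the $x$-slot and equals $(\partial_{n_{\partial V}} u)(x)\,u(y)u(z)u(w)\,n_{x}$. A direct computation gives
\begin{equation*}
\nabla \rho_{1} \cdot (n_{x},0,0,0) = \frac{(x_{1},\epsilon x_{2})\cdot n_{x}}{\rho_{1}(x,y,z,w)},
\end{equation*}
and by \eqref{geometric-condition} combined with compactness of $\partial V$, this is bounded below by $\delta/\rho_{1}$ for some $\delta>0$. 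Hence the boundary term dominates the left-hand side of \eqref{localsmoothing}, while the right-hand side is controlled by $|\mathcal{M}(T)| + |\mathcal{M}(0)| \lesssim \|v\|_{L^{2}}\|\nabla v\|_{L^{2}} \lesssim M^{2}\cdot M^{3/2}E^{1/2} = M^{7/2}E^{1/2}$ since $|\nabla\rho_{1}|\leq 1$.

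The main technical obstacle is carrying out the Morawetz computation carefully with the non-radial weight $\rho_{1}$ on the product domain, in particular verifying that the Hessian term has the right sign (which reduces to convexity of the gauge $\rho_{1}$, and is the substantive use of the ellipsoidal structure), and tracking the four symmetric boundary faces so that each contributes a copy of the desired integral. Once these signs are in hand, the conclusion \eqref{localsmoothing} follows by closely mirroring the argument in \cite{PV2D}.
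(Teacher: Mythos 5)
Your proposal is correct and follows essentially the same route as the paper: tensor to an eight-dimensional problem on $\Omega^4$, run the Morawetz/virial computation with the ellipsoidal weight $\rho_1$, discard the sign-definite bulk terms (bilaplacian via Remark \ref{thebilaplacian} with $n=4$, Hessian by convexity of the gauge, nonlinear commutator by the Laplacian sign), and keep the boundary term whose positivity comes from the geometric condition \eqref{geometric-condition}, finally bounding the Morawetz action by $\|U\|_{L^2}\|\nabla U\|_{L^2}\lesssim M^{7/2}E^{1/2}$. The only cosmetic difference is that you take the first time derivative of the Morawetz action $\operatorname{Im}\int\bar v\,\nabla\rho_1\cdot\nabla v$, whereas the paper takes the second time derivative of $\int|U|^2\rho_1$; these are the same identity.
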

\begin{proof}
First, define $v(x,y)=u(x)u(y)$ solution to the problem
\begin{align*}
&i\partial_{t} v+\Delta v=\alpha (|u|^{p-1}(x)+|u|^{p-1}(y))v\;\;\;\;\mbox{in}\;\Omega\times\Omega\\
&v|_{\partial(\Omega\times\Omega)}=0,
\end{align*}
For star-shaped obstacles, in order to obtain local smoothing near the boundary, Planchon and Vega (\cite{PV2D}) considered 
$$\int_{\Omega\times\Omega}|v|^{2}(x,y,t)h(x,y)dxdy$$ with $h(x,y)=\sqrt{|x|^{2}+|y|^{2}}$ and computed the double derivative with respect 
to time of the 4D integral thus overcoming the problem of the wrong sign of the bilaplacian in 2D. To generalize their procedure to 
obstacles satisfying condition \eqref{geometric-condition}, we should take a weight of the form $\sqrt{x_{1}^{2}+\epsilon x_{2}^{2}+y_{1}^{2}+\epsilon y_{2}^{2}}$ 
to ensure that the boundary term has a right sign. However, this will not be enough to cover all epsilons with $ 0<\epsilon<1$ 
since the bilaplacian will not always have the right sign (see Remark \ref{thebilaplacian}). 
 This problem can be solved by increasing the dimension through applying the tensor product technique again. Remark that to ensure a 
right sign of the bilaplacian it is enough to be in 6D; but to preserve the symmetry of the computations (which is essential in Proposition \ref{D-1/2}), we 
will apply the tensor product technique again for $v$. 
Thus we define $U(x,y,z,w)=v(x,y)v(z,w)=u(x)u(y)u(z)u(w)$ solution to the 8D problem
\begin{align*}
&i\partial_{t} U+\Delta U=\alpha N(u)U\;\;\;\;\mbox{in}\;\Omega\times\Omega\times\Omega\times\Omega\\
&U|_{\partial(\Omega\times\Omega\times\Omega\times\Omega)}=0,
\end{align*}
with $$N(u)=|u|^{p-1}(x)+|u|^{p-1}(y)+|u|^{p-1}(z)+|u|^{p-1}(w).$$
Now, we consider

$$M_{\rho_{1}}(t)=\int_{\Omega\times\Omega\times\Omega\times\Omega}|U|^{2}(x,y,z,w,t)\rho_{1}(x,y,z,w)dxdydzdw$$

for 
$$\rho_{1}=\sqrt{x_{1}^{2}+y_{1}^{2}+z_{1}^{2}+w_{1}^{2}+\epsilon(x_{2}^{2}+y_{2}^{2}+z_{2}^{2}+w_{2}^{2})}$$
with $x=(x_{1},x_{2}),\;y=(y_{1},y_{2}),\;z=(z_{1},z_{2}),\;w=(w_{1},w_{2}).$

and we compute $\frac{d^{2}}{dt^{2}}M_{\rho_{1}}(t)$. This is a standard computation and similar to the one \cite{PV09} and \cite{PV2D}, up to slight 
modifications to the nonlinear term. We replicate this computation here so that the argument will be self-contained:
We have 
$$i\partial_{t}(|U|^{2})=U\Delta\overline{U}-\overline{U}\Delta U=\div(U\nabla \overline{U}-\overline{U}\nabla U)=-2i\div(Im \overline{U}\nabla U)$$
hence, by integration by parts and using the Dirichlet boundary condition we get
$$\frac{d}{dt}M_{\rho_{1}}(t)=-2Im\int \rho_{1}\div(\overline{U}\nabla U)=2 Im\int\overline{U}\nabla U\cdot\nabla \rho_{1}$$
Now,
\begin{align*}
\frac{d^{2}}{dt^{2}}M_{\rho_{1}}(t)&=2 Im\int(\partial_{t}\overline{U}\nabla U+\overline{U}\nabla\partial_{t}U)\cdot\nabla \rho_{1}=-2Im\int\partial_{t}U(2\nabla\overline{U}\cdot\nabla \rho_{1}+\overline{U}\Delta \rho_{1})\\
&=-2Re\int(\Delta U-\alpha N(u)U)(2\nabla\overline{U}\cdot\nabla \rho_{1}+\overline{U}\Delta \rho_{1})\\
&=-4Re\int\Delta U\nabla\overline{U}\cdot\nabla \rho_{1}+2\int|\nabla U|^{2}\Delta \rho_{1}+2Re\int\overline{U}\nabla U\cdot\nabla(\Delta \rho_{1})\\
&\;\;+2\alpha\int N(u)\nabla(|U|^{2})\nabla \rho_{1}+2\alpha\int N(u)|U|^{2}\Delta \rho_{1}\\
&=-4Re\int\Delta U\nabla\overline{U}\cdot\nabla \rho_{1}+2\int|\nabla U|^{2}\Delta \rho_{1}-\int|U|^{2}\Delta^{2}\rho_{1}-2\alpha\int|U|^{2}\nabla N\cdot\nabla \rho_{1}.
\end{align*}
Integrating by parts again,
$$\int\Delta U\nabla\overline{U}\cdot\nabla \rho_{1}=-\int|\partial_{n}U|^{2}\partial_{n}\rho_{1}-\int\nabla U\cdot\nabla(\nabla\overline{U}\cdot\nabla \rho_{1})$$
where $n$ is the normal pointing into the domain.
Thus
\begin{align*}
2Re\int\Delta U\nabla\overline{U}\cdot\nabla \rho_{1}&=-2\int|\partial_{n}U|^{2}\partial_{n}\rho_{1}-\int\nabla(|\nabla U|^{2})\cdot\nabla \rho_{1}-2\int Hess \rho_{1}(\nabla U,\nabla\overline{U})\\
&=-2\int|\partial_{n}U|^{2}\partial_{n}\rho_{1}+\int|\nabla U|^{2}\Delta \rho_{1}-2\int Hess \rho_{1}(\nabla U,\nabla\overline{U})
\end{align*}
Moreover, by integrating by parts we have
\begin{align*}
&-2\alpha\int|U|^{2}\nabla N\cdot\nabla \rho_{1}\\
&=\frac{2(p-1)}{p+1}\alpha\int|U|^{2}(|u|^{p-1}(x)\Delta_{x}\rho_{1}+|u|^{p-1}(y)\Delta_{y}\rho_{1}+|u|^{p-1}(z)\Delta_{z}\rho_{1}+|u|^{p-1}(w)\Delta_{w}\rho_{1})
\end{align*}
and we finally obtain
\begin{align}\label{Mh}
&\frac{d^{2}}{dt^{2}}M_{\rho_{1}}(t)=-\int|U|^{2}\Delta^{2}\rho_{1}+2\int|\partial_{n}U|^{2}\partial_{n}\rho_{1}+4\int Hess \rho_{1}(\nabla U,\nabla\overline{U})\\
&+\frac{2(p-1)}{p+1}\alpha\int|U|^{2}(|u|^{p-1}(x)\Delta_{x}\rho_{1}+|u|^{p-1}(y)\Delta_{y}\rho_{1}+|u|^{p-1}(z)\Delta_{z}\rho_{1}+|u|^{p-1}(w)\Delta_{w}\rho_{1})\notag
\end{align}
From our choice of the convex function $\rho_{1}$ we have 
that the terms with the Hessian as well as those with the Laplacian are positive.\newline
We also have from Remark \ref{thebilaplacian} that 8D bilaplacian ($n=4$) $\Delta^{2}\rho_{1}$ is negative $\forall 0<\epsilon<1$.
Now, we deal with boundary term. First, we look at the term $\partial_{n}\rho_{1}$ with $n$ the 
normal pointing into $\Omega\times\Omega\times\Omega\times\Omega$, we have
\begin{align*}
&n=(n_{x},0,0,0)\;\;\;\;\mbox{if}\;x\in\partial\Omega,\;y,z,w\in\Omega\\
&n=(0,n_{y},0,0)\;\;\;\;\mbox{if}\;y\in\partial\Omega,\;x,z,w\in\Omega\\
&n=(0,0,n_{z},0)\;\;\;\;\mbox{if}\;z\in\partial\Omega,\;x,y,w\in\Omega\\
&n=(0,0,0,n_{w})\;\;\;\;\mbox{if}\;w\in\partial\Omega,\;x,y,z\in\Omega
\end{align*}
Hence, if $x\in\partial\Omega$
$$\partial_{n}\rho_{1}=\frac{(x_{1},\epsilon x_{2})\cdot n_{x}}{\rho_{1}}$$
which is strictly positive by the geometric condition we imposed \eqref{geometric-condition}. Moreover,
$$\partial_{n}\rho_{1}\geq\frac{C}{\rho_{1}}$$
and we also have $$|\partial_{n}U|^{2}=|\partial_{n_{x}}u(x)|^{2}|u(y)|^{2}|u(z)|^{2}|u(w)|^{2},$$ 
and we deal similarly when $y,z,\;\mbox{or}\;w\in\partial\Omega$.
Hence, \eqref{Mh} yields
\begin{align*}
\int\int_{\partial\Omega\times\Omega\times\Omega\times\Omega}\frac{|\partial_{n_{x}}u(x)|^{2}}{\rho_{1}}|u(y)|^{2}|u(z)|^{2}|u(w)|^{2}d\sigma dt
\lesssim M^{7/2}E^{1/2}
\end{align*}
which ends the proof of Proposition \ref{boundarycontrol1}.
\end{proof}
\vspace{3mm}
Due to the fact that we are doing the tensor product technique more than once, and we are dealing now with four 2D variables, we will need extra estimates on the boundary. 
We have the following proposition:
\begin{proposition}\label{boundarycontrol2}
  Under the conditions of Proposition \ref{boundarycontrol1}, we have the following estimate
\begin{equation}\label{1-2}
\int\int_{\partial\Omega\times\Omega\times\Omega\times\Omega}\frac{|\partial_{n_{x}}u(x)|^{2}|u(y)|^{2}|u(z)|^{2}|u(w)|^{2}}{\sqrt{|x|^{2}+|z|^{2}+|y\pm w|^{2}}}d\sigma_{x} dydzdwdt\lesssim M^{7/2}E^{1/2}
\end{equation}
\end{proposition}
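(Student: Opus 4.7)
The plan is to mimic the Morawetz-type argument of Proposition \ref{boundarycontrol1}, applied again to the 8D solution $U(x,y,z,w)=u(x)u(y)u(z)u(w)$, but with a new weight tailored so that (i) its normal derivative on $\{x\in\partial V\}\times\Omega^3$ produces the denominator $\sqrt{|x|^2+|z|^2+|y\pm w|^2}$ and (ii) its 8D bilaplacian has the correct sign. I would take
$$h_\pm(x,y,z,w):=\sqrt{(x_1^2+\epsilon x_2^2)+(z_1^2+\epsilon z_2^2)+\tfrac{1}{2}\bigl((y_1\pm w_1)^2+\epsilon (y_2\pm w_2)^2\bigr)},$$
set $M_{h_\pm}(t):=\int_{\Omega^4}|U|^2 h_\pm\,dxdydzdw$ and compute $\frac{d^2}{dt^2}M_{h_\pm}(t)$ exactly as in \eqref{Mh}, producing a boundary term, a Hessian term, a bilaplacian term $-\int|U|^2\Delta^2 h_\pm$, and a nonlinear term.

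Three sign checks are then needed. At $x\in\partial V$, $\nabla_x h_\pm=(x_1,\epsilon x_2)/h_\pm$, and \eqref{geometric-condition} together with compactness of $\partial V$ give a uniform $c>0$ with $\partial_{n_x}h_\pm\geq c/h_\pm$; combined with $h_\pm\leq\sqrt{|x|^2+|z|^2+|y\pm w|^2}$ (since $\epsilon\leq 1$ and $\tfrac12\leq 1$), this yields $\partial_{n_x}h_\pm\geq c/\sqrt{|x|^2+|z|^2+|y\pm w|^2}$, exactly the weight required in \eqref{1-2}. Next, $h_\pm^2$ is a positive semi-definite quadratic form on $\mathbb{R}^8$, hence $h_\pm$ is convex, its Hessian is PSD, and a direct check shows that the partial Laplacians $\Delta_x h_\pm,\dots,\Delta_w h_\pm$ are non-negative, so the nonlinear term in the identity is non-negative. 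Finally, for the bilaplacian I would perform the orthogonal change of variables $c_i=(y_i\pm w_i)/\sqrt 2$, $d_i=(y_i\mp w_i)/\sqrt 2$, which turns $h_\pm$ into $\sqrt{x_1^2+\epsilon x_2^2+z_1^2+\epsilon z_2^2+c_1^2+\epsilon c_2^2}$ in the six variables $(x_1,x_2,z_1,z_2,c_1,c_2)$; the factor $\tfrac{1}{2}$ in $h_\pm^2$ was chosen precisely to absorb the factor $2$ arising from the substitution $(y_i\pm w_i)^2=2c_i^2$. Since the change is orthogonal and $h_\pm$ is $d$-independent, $\Delta^2_{8D}h_\pm=\Delta^2_{6D}h_\pm$, and the six-dimensional weight falls under Remark \ref{thebilaplacian} with $n=3$, for which $A,B,C\leq 0$ for every $\epsilon\in(0,1]$; hence $\Delta^2 h_\pm\leq 0$.

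With all interior contributions non-negative, integrating the Morawetz identity in time gives
$$\int_{\mathbb{R}}\int_{\partial V\times\Omega^3}\frac{|\partial_{n_x}u(x)|^2|u(y)|^2|u(z)|^2|u(w)|^2}{\sqrt{|x|^2+|z|^2+|y\pm w|^2}}\,d\sigma_x\,dydzdw\,dt\lesssim\sup_{t}\Bigl|\tfrac{d}{dt}M_{h_\pm}(t)\Bigr|,$$
and a standard Cauchy--Schwarz bound $|\tfrac{d}{dt}M_{h_\pm}(t)|\lesssim\|U\|_{L^2}\|\nabla U\|_{L^2}\lesssim M^2\cdot M^{3/2}E^{1/2}=M^{7/2}E^{1/2}$ closes the estimate. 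The principal obstacle is the design of the weight: it must simultaneously encode the $\epsilon$-anisotropy (to exploit \eqref{geometric-condition} on the boundary) and the tensor-product coupling in $y\pm w$ while maintaining $\Delta^2 h_\pm\leq 0$ in eight dimensions; the orthogonal reduction above is what reduces this sign check to an instance of Remark \ref{thebilaplacian} and makes the choice of $h_\pm$ work uniformly for $0<\epsilon\leq 1$.
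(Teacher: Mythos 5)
Your choice of weight $h_\pm$ is, up to notation, the paper's $\rho_2^\pm$, and your treatment of the bilaplacian (orthogonal change of variables reducing to the 6D case of Remark \ref{thebilaplacian}) and of the boundary term at $x\in\partial\Omega$ (and by symmetry $z\in\partial\Omega$) agree with the paper. But there is a genuine gap: you never address the boundary terms arising when $y\in\partial\Omega$ or $w\in\partial\Omega$, and for the single weight $h_+$ (or $h_-$) these are uncontrolled. Indeed, $\partial(\Omega^4)$ has four pieces, and at $y\in\partial\Omega$ the Morawetz identity produces
\begin{equation*}
2\int_{\Omega\times\partial\Omega\times\Omega\times\Omega} |u(x)|^{2}\,|\partial_{n_y}u(y)|^{2}\,|u(z)|^{2}\,|u(w)|^{2}\,\partial_{n_y}h_+ \,dx\,d\sigma_y\,dz\,dw .
\end{equation*}
Here $\nabla_y h_+ = \tfrac{1}{2h_+}\,(y_1+w_1,\;\epsilon(y_2+w_2))$. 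The contribution $\tfrac{1}{2h_+}(y_1,\epsilon y_2)\cdot n_y$ is fine (and small, since $h_+$ is large), but the contribution $\tfrac{1}{2h_+}(w_1,\epsilon w_2)\cdot n_y$ has no sign and is of size $O(1)$ whenever $|w|\sim h_+$, which is the typical regime. Since $\int_{\partial\Omega}|\partial_{n_y}u|^{2}\,d\sigma_y$ is not a priori controlled by $M,E$ — controlling exactly this kind of quantity is the whole point of these propositions — you cannot discard or absorb this term, and the argument breaks.

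The paper's fix is to work with the \emph{sum} $\rho_2 = \rho_2^- + \rho_2^+$ rather than with $\rho_2^\pm$ separately. After the expansion
\begin{equation*}
\nabla_{y}\rho_{2}=\tfrac{1}{2\rho_{2}^{-}}(y_{1}-w_{1},\epsilon(y_{2}-w_{2}))+\tfrac{1}{2\rho_{2}^{+}}(y_{1}+w_{1},\epsilon(y_{2}+w_{2})),
\end{equation*}
the leading $w$-contributions from $\rho_2^-$ and $\rho_2^+$ cancel to first order: writing $1/\rho_2^\pm = 1/\gamma + O(1/\gamma^2)$ with $\gamma$ as in the paper, the $w$-part collapses to an $O(1/\gamma^2)$ remainder (multiplied by $|w|\lesssim \gamma$), and the $y$-part is bounded since $y$ lies in the compact set $\partial\Omega$. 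One gets $|\nabla_y\rho_2|\lesssim 1/\gamma\lesssim 1/\rho_1$, and the (possibly sign-indefinite) boundary term at $y\in\partial\Omega$ is then absorbed by Proposition \ref{boundarycontrol1}, which is precisely the role of that auxiliary estimate. Your proposal misses this cancellation mechanism and does not use Proposition \ref{boundarycontrol1} at all; with a single $h_\pm$ there is nothing to absorb the bad boundary contribution, so the conclusion \eqref{1-2} does not follow from the argument as written.
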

\begin{rem}
 Remark that Proposition \ref{boundarycontrol2} is obviously improving over Proposition \ref{boundarycontrol1}, as the new weight has
less decay  in some
directions (actually, no decay in direction $y-w$ or $y+w$ for
example!), whereas $\rho_{1}$ is uniformly decaying in all directions.
\end{rem}

\begin{proof}
To prove the estimates \eqref{1-2}, we do the same standard procedure as in Proposition \ref{boundarycontrol1} with the weight 
$\rho_{2}$ defined as
\begin{align*}
\rho_{2}&=\sqrt{x_{1}^{2}+\epsilon x_{2}^{2}+z_{1}^{2}+\epsilon z_{2}^{2}+\left(\frac{y_{1}-w_{1}}{\sqrt{2}}\right)^{2}+\epsilon\left(\frac{y_{2}-w_{2}}{\sqrt{2}}\right)^{2}}\\
&\;+\sqrt{x_{1}^{2}+\epsilon x_{2}^{2}+z_{1}^{2}+\epsilon z_{2}^{2}+\left(\frac{y_{1}+w_{1}}{\sqrt{2}}\right)^{2}+\epsilon\left(\frac{y_{2}+w_{2}}{\sqrt{2}}\right)^{2}}\\
&:=\rho_{2}^{-}+\rho_{2}^{+}
\end{align*}
Again, we consider
$$M_{\rho_{2}}(t)=\int_{\Omega\times\Omega\times\Omega\times\Omega}|U|^{2}(x,y,z,w,t)\rho_{2}(x,y,z,w)dxdydzdw$$
and we compute $\frac{d^{2}}{dt^{2}}M_{\rho_{2}}(t)$
to get 
\begin{align}\label{Mrho2}
&\frac{d^{2}}{dt^{2}}M_{\rho_{2}}(t)=-\int|U|^{2}\Delta^{2}\rho_{2}+2\int|\partial_{n}U|^{2}\partial_{n}\rho_{2}+4\int Hess \rho_{2}(\nabla U,\nabla\overline{U})\\
&+\frac{2(p-1)}{p+1}\alpha\int|U|^{2}(|u|^{p-1}(x)\Delta_{x}\rho_{2}+|u|^{p-1}(y)\Delta_{y}\rho_{2}+|u|^{p-1}(z)\Delta_{z}\rho_{2}+|u|^{p-1}(w)\Delta_{w}\rho_{2})\notag
\end{align}
Note that $\rho_{2}$ is convex thus the Hessian is positive, and the terms with the Laplacian are positive as well. 
As for the term of the bilaplacian, note that the functions $\rho_{2}^{-}$ and $\rho_{2}^{+}$ of $(x,y,z,w)$ can be also viewed as
functions of $$(x,z,\frac{y-w}{\sqrt{2}},\frac{y+w}{\sqrt{2}}):=(\xi_{1},\xi_{2},\xi_{3},\xi_{4})$$ with
$\nabla_{\xi_{3}}\rho_{2}^{+}=0$ and $\nabla_{\xi_{4}}\rho_{2}^{-}=0$. Since the bilaplacian in invariant under rotation, we have
$$\Delta^{2}_{x,y,z,w}\rho_{2}^{-}=\Delta^{2}_{\xi_{1},\xi_{2},\xi_{3},\xi_{4}}\rho_{2}^{-}
=\Delta^{2}_{\xi_{1},\xi_{2},\xi_{3}}\left(\sqrt{\xi_{11}^{2}+\xi_{21}^{2}+\xi_{31}^{2}+\epsilon(\xi_{12}^{2}+\xi_{22}^{2}+\xi_{32}^{2})}\right)$$
and by Remark \ref{thebilaplacian}, this 6D bilaplacian ($n=3$) is negative.
Similarly, $\Delta^{2}\rho_{2}^{+}<0$, hence we have $\Delta^{2}\rho_{2}<0$.\newline
Now, we deal the boundary term in \eqref{Mrho2}.
First, we want to control the terms we get on the boundary when $(y,w)\in\partial(\Omega\times\Omega)$. If $y\in\partial\Omega$ then 
$$\nabla_{y}\rho_{2}=\frac{1}{2\rho_{2}^{-}}(y_{1}-w_{1},\epsilon(y_{2}-w_{2}))+\frac{1}{2\rho_{2}^{+}}(y_{1}+w_{1},\epsilon(y_{2}+w_{2}))$$
Introduce
$$\gamma=\sqrt{x_{1}^{2}+\epsilon x_{2}^{2}+z_{1}^{2}+\epsilon z_{2}^{2}+\frac{1}{2}(y_{1}^{2}+\epsilon y_{2}^{2}+w_{1}^{2}+\epsilon w_{2}^{2})}$$
Thus, $$\rho_{2}^{-}=\gamma\sqrt{1-\frac{y_{1}w_{1}+\epsilon y_{2}w_{2}}{\gamma^{2}}}$$
and $$\rho_{2}^{+}=\gamma\sqrt{1+\frac{y_{1}w_{1}+\epsilon y_{2}w_{2}}{\gamma^{2}}}$$
now, we write
$$\frac{1}{\rho_{2}^{\pm}}=\frac{1}{\gamma}+\frac{1}{\gamma}\left(\frac{1}{\sqrt{1\pm\frac{y_{1}w_{1}+\epsilon y_{2}w_{2}}{\gamma^{2}}}}-1\right)$$
and substitute in $\nabla_{y}\rho_{2}$ to get
\begin{align*}
\nabla_{y}\rho_{2}&=\frac{(y_{1},\epsilon y_{2})}{\gamma}
+\frac{(y_{1},\epsilon y_{2})}{2}\left[\frac{1}{\gamma}\left(\frac{1}{\sqrt{1-\frac{y_{1}w_{1}+\epsilon y_{2}w_{2}}{\gamma^{2}}}}-1\right)
+\frac{1}{\gamma}\left(\frac{1}{\sqrt{1+\frac{y_{1}w_{1}+\epsilon y_{2}w_{2}}{\gamma^{2}}}}-1\right)\right]\\
&\;\;\;\;+\frac{(w_{1},\epsilon w_{2})}{2}\left[\frac{1}{\gamma}\left(\frac{1}{\sqrt{1+\frac{y_{1}w_{1}+\epsilon y_{2}w_{2}}{\gamma^{2}}}}-1\right)
-\frac{1}{\gamma}\left(\frac{1}{\sqrt{1-\frac{y_{1}w_{1}+\epsilon y_{2}w_{2}}{\gamma^{2}}}}-1\right)\right]
\end{align*}
Using the fact the $y$ is bounded and $\gamma$ is large enough, there exists a positive constant $c$ such that
$$\frac{|y_{1}w_{1}+\epsilon y_{2}w_{2}|}{\gamma^{2}}\leq\frac{c}{\gamma}<1$$
and thus 
$$\left|\frac{1}{\gamma}\left(\frac{1}{\sqrt{1\pm\frac{y_{1}w_{1}+\epsilon y_{2}w_{2}}{\gamma^{2}}}}-1\right)\right|\leq\left|\frac{1}{\gamma}\left(\frac{1}{\sqrt{1-\frac{c}{\gamma}}}-1\right)\right|\lesssim\frac{1}{\gamma^{2}}$$
this implies that
$$|\nabla_{y}\rho_{2}|\lesssim\frac{1}{\gamma}\leq\frac{\sqrt{2}}{\rho_{1}}$$
so, the boundary term obtained when $y\in\partial\Omega$ is controlled by Proposition \ref{boundarycontrol1}:
\begin{align*}
&\int\int_{\Omega\times\partial\Omega\times\Omega\times\Omega}|u(x)|^{2}|\partial_{n_{y}}u(y)|^{2}|u(z)|^{2}|u(w)|^{2}|\partial_{n}\rho_{2}|dx d\sigma_{y}dzdwdt\\
&\;\lesssim\int\int_{\Omega\times\partial\Omega\times\Omega\times\Omega}\frac{|u(x)|^{2}|\partial_{n_{y}}u(y)|^{2}|u(z)|^{2}|u(w)|^{2}}{\rho_{1}}dx d\sigma_{y}dzdwdt\lesssim M^{7/2}E^{1/2}
\end{align*}
similarly for the boundary term generated when $w\in\partial\Omega$.\newline
Now, when $x\in\partial\Omega$, then 
$$\partial_{n}\rho_{2}=\nabla_{x}\rho_{2}\cdot n_{x}=\left(\frac{1}{\rho_{2}^{-}}+\frac{1}{\rho_{2}^{+}}\right)(x_{1},\epsilon x_{2})\cdot n_{x}$$
Again by the geometry of the obstacle, we have $(x_{1},\epsilon x_{2})\cdot n_{x}>0$ and thus 
$$\partial_{n}\rho_{2}\gtrsim\frac{1}{\rho_{2}^{-}}+\frac{1}{\rho_{2}^{+}}\gtrsim_\varepsilon\frac{1}{\sqrt{|x|^{2}+|z|^{2}+|y-w|^{2}}}+\frac{1}{\sqrt{|x|^{2}+|z|^{2}+|y+w|^{2}}}$$
and $$|\partial_{n}U|^{2}=|\partial_{n_{x}}u(x)|^{2}|u(y)|^{2}|u(z)|^{2}|u(w)|^{2}.$$
and we deal similarly when $z\in\partial\Omega$.
So, finally \eqref{Mrho2} yields 
\begin{align*}
&\int\int_{\partial\Omega\times\Omega\times\Omega\times\Omega}\frac{|\partial_{n_{x}}u(x)|^{2}|u(y)|^{2}|u(z)|^{2}|u(w)|^{2}}{\sqrt{|x|^{2}+|z|^{2}+|y-w|^{2}}}d\sigma_{x}dydzdwdt\\
&+\int\int_{\partial\Omega\times\Omega\times\Omega\times\Omega}\frac{|\partial_{n_{x}}u(x)|^{2}|u(y)|^{2}|u(z)|^{2}|u(w)|^{2}}{\sqrt{|x|^{2}+|z|^{2}+|y+w|^{2}}} d\sigma_{x}dydzdwdt\lesssim M^{7/2}E^{1/2}.
\end{align*}
\end{proof}
Now, we are ready to prove Proposition \ref{D-1/2}.
Again, we proceed in a similar argument to that in previous propositions, we consider
$$M_{\rho}(t)=\int_{(\Omega\times\Omega)\times(\Omega\times\Omega)}|U|^{2}(X,Y,t)\rho(X,Y)dXdY$$
where $U(X,Y)=v(X)v(Y)=u(x)u(y)u(z)u(w)$, with $X=(x,y),\;Y=(z,w)\in\Omega\times\Omega$, is the solution to the problem
\begin{align*}
&i\partial_{t} U+\Delta U=\alpha N(u)U\;\;\;\;\mbox{in}\;\Omega\times\Omega\times\Omega\times\Omega\\
&U|_{\partial(\Omega\times\Omega\times\Omega\times\Omega)}=0,
\end{align*}
with $$N(u)=|u|^{p-1}(x)+|u|^{p-1}(y)+|u|^{p-1}(z)+|u|^{p-1}(w)$$
for 
\begin{align*}
\rho(X,Y)=&|X-Y|+|X'+Y|+|X'-Y|+|X+Y|\\
=&\sqrt{|x-z|^{2}+|y-w|^{2}}+\sqrt{|x+z|^{2}+|y-w|^{2}}\\
&+\sqrt{|x-z|^{2}+|y+w|^{2}}+\sqrt{|x+z|^{2}+|y+w|^{2}}
\end{align*}
where $X'=(x,-y)$.
Doing the same standard computation, we get
\begin{align}\label{Mrho}
&\frac{d^{2}}{dt^{2}}M_{\rho}(t)=-\int|U|^{2}\Delta^{2}\rho+2\int|\partial_{n}U|^{2}\partial_{n}\rho+4\int Hess\rho(\nabla U,\nabla\overline{U})\\
&+\frac{2(p-1)}{p+1}\alpha\int|U|^{2}(|u|^{p-1}(x)\Delta_{x}\rho+|u|^{p-1}(y)\Delta_{y}\rho+|u|^{p-1}(z)\Delta_{z}\rho+|u|^{p-1}(w)\Delta_{w}\rho)\notag
\end{align}
The weight $\rho$ is convex and thus the Hessian term and the Laplacian terms are positive. 
Moreover, we have
$$-\Delta^{2}\rho=12\left(\frac{1}{|X-Y|^{3}}+\frac{1}{|X'+Y|^{3}}+\frac{1}{|X'-Y|^{3}}+\frac{1}{|X+Y|^{3}}\right)$$
Now, we control the boundary term. If $x\in\partial\Omega$ and $y,z,w\in\Omega$ then 
$\partial_{n}\rho=\nabla_{x}\rho\cdot n_{x}$ and we have
\begin{align*}
\nabla_{x}\rho&=\frac{x-z}{|X-Y|}+\frac{x+z}{|X'+Y|}+\frac{x-z}{|X'-Y|}+\frac{x+z}{|X+Y|}
\end{align*}
Setting $\lambda_{-}^{2}=|x|^{2}+|z|^{2}+|y-w|^{2}$ and $\lambda_{+}^{2}=|x|^{2}+|z|^{2}+|y+w|^{2}$, we have:
\begin{align*}
|X-Y|^{2}&=\lambda_{-}^{2}\left(1-\frac{2x\cdot z}{\lambda_{-}^{2}}\right),\;\;|X'+Y|^{2}=\lambda_{-}^{2}\left(1+\frac{2x\cdot z}{\lambda_{-}^{2}}\right)\\
|X'-Y|^{2}&=\lambda_{+}^{2}\left(1-\frac{2x\cdot z}{\lambda_{+}^{2}}\right),\;\;|X+Y|^{2}=\lambda_{+}^{2}\left(1+\frac{2x\cdot z}{\lambda_{+}^{2}}\right)
\end{align*}
Reasoning as in Proposition \ref{boundarycontrol2}, we write 
$$\frac{1}{|X\pm Y|}=\frac{1}{\lambda_{\pm}}+\frac{1}{\lambda_{\pm}}\left(\frac{1}{\sqrt{1\pm\frac{2x\cdot z}{\lambda_{\pm}^{2}}}}-1\right)$$
and 
$$\frac{1}{|X'\pm Y|}=\frac{1}{\lambda_{\mp}}+\frac{1}{\lambda_{\mp}}\left(\frac{1}{\sqrt{1\pm\frac{2x\cdot z}{\lambda_{\mp}^{2}}}}-1\right)$$
and we substitute in $\nabla_{x}\rho$ which yields some convenient cancellations in the $z$ terms. Then, using the fact that $|x|$ is under control and $\lambda_{\pm}$ are large enough, there 
exists a positive constant $c'$ such that
$$\frac{|2x\cdot z|}{\lambda_{\pm}^{2}}\leq\frac{c'}{\lambda_{\pm}}<1$$ and thus 
$$\left|\frac{1}{\lambda_{-}}\left(\frac{1}{\sqrt{1\pm\frac{2x\cdot z}{\lambda_{-}^{2}}}}-1\right)\right|\leq\left|\frac{1}{\lambda_{-}}\left(\frac{1}{\sqrt{1-\frac{c'}{\lambda_{-}}}}-1\right)\right|\lesssim\frac{1}{\lambda_{-}^{2}}$$
and similarly for $\lambda_{+}$. This yields that 
$$|\nabla_{x}\rho|\lesssim\frac{1}{\lambda_{-}}+\frac{1}{\lambda_{+}},$$ and thus 
the boundary term generated when $x\in\partial\Omega$ is controlled by \eqref{1-2} of Proposition \ref{boundarycontrol2}, 
and similarly when $z\in\partial\Omega$.\newline 
Now, when $y\in\partial\Omega$ or $w\in\partial\Omega$, we do a similar procedure but with 
$$\widetilde{\lambda}_{\pm}^{2}=|y|^{2}+|w|^{2}+|x\pm z|^{2},$$ and we get the same control on the boundary terms by 
Proposition \ref{boundarycontrol2}.
So, finally, \eqref{Mrho} yields
$$\int\int_{(\Omega\times\Omega)\times(\Omega\times\Omega)}\frac{|v(X)|^{2}|v(Y)|^{2}}{|X-Y|^{3}}dXdYdt\lesssim M^{7/2}E^{1/2}$$
which actually holds on $\mathbb{R}^4\times \mathbb{R}^4$ provided we
extend $v$ by zero
inside the obstacle. Then, by Plancherel's theorem we get 
$$\|D^{-1/2}(|v|^{2})\|^{2}_{L^{2}_{t,X}}$$
which ends the proof of Proposition \ref{D-1/2}.

\end{document}